\documentclass[leqno,11pt]{article}

\usepackage{amsmath, amssymb}
\usepackage{amsthm} 
\usepackage{amssymb}

\usepackage{mathrsfs}
\usepackage{amssymb, url, color, graphicx, amscd, mathrsfs}%pb-diagram} url,
\usepackage[colorlinks=true, bookmarks=true, pdfstartview=FitH, pagebackref=true, linktocpage=true, linkcolor = magenta, citecolor = blue]{hyperref}
 
\usepackage{graphicx}
\usepackage{times}
\usepackage{mathtools} %%% coloneqq
\usepackage{enumerate} 

\theoremstyle{plain} %text of this environment is typesetted in italics
\newtheorem{theorem}{\indent\sc Theorem}[section]
\newtheorem{lemma}[theorem]{\indent\sc Lemma}
\newtheorem{corollary}[theorem]{\indent\sc Corollary}
\newtheorem{proposition}[theorem]{\indent\sc Proposition}

\theoremstyle{definition} %text of this environment is typesetted in roman letters
\newtheorem{definition}[theorem]{\indent\sc Definition}
\newtheorem{remark}[theorem]{\indent\sc Remark}

\numberwithin{equation}{section}
\def\Rc{{\mathrm {Rc}}}
\def\scal{\mathrm{scal}}

\DeclareMathOperator{\Hess}{Hess}

\DeclareMathOperator{\I}{Isom}

\DeclareMathOperator{\R}{\mathbb{R}}
\DeclareMathOperator{\RP}{\mathbb{R}\mathrm{P}}

\DeclareMathOperator{\reg}{\mathrm{reg}}

\def\ddt{\frac{d}{dt}}

\makeatletter
\def\address#1#2{\begingroup
\noindent\parbox[t]{7.8cm}{%
\small{\scshape\ignorespaces#1}\par\vskip1ex
\noindent\small{\itshape E-mail address}%
\/: #2\par\vskip4ex}\hfill%
\endgroup}%
\makeatother
\title{K\"{a}hler-Ricci Solitons With Almost Maximal Symmetry} %title of the paper
\author{\textsc{Ha Tuan Dung, Catherine Searle, and Hung Tran}
\textsc{} %names of authors
}
\date{} %leave empty
\begin{document}

	\maketitle
	
	%%%%%%%%%%%%%%% footnote %%%%%%%%%%%%%%%%
	\footnote{ %2010 MSC numbers
		2010 \textit{Mathematics Subject Classification}:
		Primary 53C15, 53C25, 53C55,
		53E20, 53E30; Secondary 53D15.
	}
	\footnote{ %key words and phrases
		\textit{Key words and phrases}: Cohomogeneity one Ricci solitons, gradient K\"{a}hler-Ricci solitons,
		Killing vector fields, Isometric actions, Almost maximal isometry group.
		
	}
	
	%%%%%%%%%%%%%%%%%%%%%%%%%%%%%%%%%%%%%%%%%
	
	\begin{abstract}

This paper studies a non-trivial  gradient K\"{a}hler-Ricci soliton, of complex dimension $n$, with an isometry group of dimension at least $n^2-1$. We show that the isometry group acts by cohomogeneity one and, consequently, admits a special ansatz involving a Sasakian model. In complex dimension two, we can actually say more: namely that every such soliton  
has maximal symmetry; 
that is, the isometry group is exactly of dimension $2^2$. In addition, we prove that, if the isometry group acts by 
cohomogeneity one on a non-trivial gradient Ricci soliton (not necessarily K\"{a}hler), the potential function is invariant by the action. 
	\end{abstract}
	\section{Introduction}
	
	\quad\quad 
	In this paper we study gradient K\"{a}hler-Ricci solitons (GKRS) with almost maximal isometry groups. Recall that a complete connected Riemannian manifold $(M, g)$ is called a gradient Ricci soliton (GRS) if there exists a smooth potential function $f$ and a constant $\lambda$ such that
	\begin{equation}
		\label{grs}
		\Rc+\Hess f=\lambda g,
	\end{equation}
	where $\Rc$ denotes the Ricci curvature of $(M, g)$, and $\Hess (\cdot)$ is the Hessian of a function. A GRS is said to be {\em shrinking, steady}, or {\em expanding} if $\lambda>0, \lambda=0$, or $\lambda<0$, respectively. A GRS 
	can be viewed as a self-similar solution to the Ricci flow and, therefore, plays a fundamental role in the analysis of singularities. In particular, it follows from Enders, M\"uller, and Topping \cite{EMT} that a blow-up limit at a Type-I singularity always converges to a non-trivial GRS. As a consequence, the classification of GRS constitutes a central theme in the theory of Ricci flows.
	
	An Einstein manifold provides the simplest example of a GRS, corresponding to the case where  $\Hess f \equiv 0$ and $\lambda$ is equal to the Einstein constant. Another simple model is the Gaussian shrinking soliton $\left(\mathbb{R}^n, g_{\mathbb{R}^m}, \lambda \frac{|x|^2}{2}\right)$. A gradient Ricci soliton is called {\em rigid} if it is isometric to a quotient of a Riemannian product $\mathbb{N}^{m-k} \times \mathbb{R}^k$, where the potential function satisfies $f=\frac{|x|^2}{2}$ on the Euclidean factor. It was shown in Petersen and Wylie \cite{Pete2} that, whenever the metric of a GRS is reducible, the soliton structure decomposes accordingly. Thus, in this paper, a soliton is said to be {\em non-trivial}, or {\em non-rigid}, if it is locally irreducible and non-Einstein. 	

A large class of  
non-trivial examples of GRS arises in the K\"{a}hler setting, and the subject has received considerable interest: see, for instance, Bamler, Ciffarelli, and Conlon   \cite{BCC},  Cao \cite{Ca1}, Chen and Zhu \cite{Chen}, Ciffarelli, Conlon, and DeRuelle \cite{Cif}, Conlon, DeRuelle, and Sun \cite{Co}, Feldman, Ilmanen, and Knopf \cite{Fel}, Munteanu and Wang \cite{MW1},  Tian and Zhu \cite{Tian}, Tran \cite{Hung0, Hung1, Hung2}, and Wang and Zhu \cite{WZhu}. In complex dimension two, substantial progress has been made with a full classification of shrinking GKRS.  
We refer the reader to \cite{BCC, Cif, Co}, Li and Wang \cite{YL}, and the references therein. 

Let $\left(M, g, J\right)$ be an almost Hermitian manifold, that is, a $2n$-dimensional Riemannian manifold equipped with an almost complex structure, $J: T M \rightarrow T M$, compatible with the metric and satisfying $J^2=-\mathrm{Id}.$ The associated K\"{a}hler form $\omega$ is defined by
$
\omega(X, Y)=g(X, J Y)
$
for all tangent vector fields $X$ and $Y$. The structure is said to be {\em almost K\"{a}hler} if $\omega$ is closed, and {\em K\"{a}hler} if, in addition, $J$ is integrable. Throughout this paper, we adopt the convention that $n$ denotes the complex dimension while $m$ the real one. A GKRS $(M, g, J, f)$ is a manifold which is simultaneously K\"{a}hler and a GRS.

Isometric actions on Riemannian manifolds play a central role in differential geometry,  
giving us insight into 
how curvature and topological invariants are related to and constrain one another. 
A fundamental result is the 
theorem of Myers and Steenrod \cite{MyS} asserting that the full isometry group of any Riemannian manifold is a finite-dimensional Lie group. Moreover, it establishes a crucial connection between Riemannian geometry and Lie theory, allowing us to see that the symmetries of a Riemannian manifold are governed by both smooth and algebraic structures.

Using the theory of $ G$-structures developed in \cite{MyS}, Kobayashi \cite{Ko3} later provided an alternative approach. In particular, he showed that the isometry group $\I(M)$ of a connected Riemannian manifold $M$ of real dimension $m$ embeds naturally as a closed submanifold of the orthonormal frame bundle $O(M).$ As a consequence, one obtains the classical dimension bound $\dim \I(M) \le \frac{1}{2}m(m+1),$ with equality if and only if the manifold $M$ is either a simply connected space form or, if $\pi_1(M)$ is non-trivial, the real projective space. For an almost Hermitian manifold of complex dimension $n$, Tanno  \cite{Ta1} showed that the automorphism group has maximal dimension equal to $n^2+n$. This upper bound is necessarily smaller 
since the automorphism group preserves an additional tensor field. Thus, one expects that imposing the soliton structure on a manifold will lead to tightened bounds as confirmed by Tran \cite{DT} and Dung-Tran \cite{Hung1} for the real and K\"{a}hler cases, respectively. In particular, \cite{Hung1} proves that the isometry group of a GKRS has dimension at most $n^{2}$ and characterizes the equality case.  

  A natural next step is to understand the case in which the size of the symmetry group of a GKRS,  as measured by the dimension of its isometry group, is {\em almost maximal}. More precisely, we consider GKRS $(M,g,J,f,\lambda)$ of complex dimension $n$ with an almost maximal isometry group of dimension at least $n^2-1$. Our first result is the following structure theorem.

 \begin{theorem}\label{m1.1} Let $(M, g, J, f, \lambda)$ be a non-trivial GKRS of complex dimension $n\geq 2$. 
 Suppose that $\dim(\I(M))\geq n^2-1$ then  $\I(M)$ acts by cohomogeneity one and preserves $f$. 
 Moreover, the structure of the action is constrained as follows:
 \begin{enumerate}

 \item[\rm 1.] Each principal orbit is a homogeneous Sasakian manifold and a deformation of a fixed Sasakian structure $\left(P, \eta, \zeta, \Phi, g_P\right)$, which
is the total space of a Riemannian submersion with circle fibers and base a homogeneous K\"{a}hler Einstein manifold $\left(N, g_N, J_N\right)$; and
 \item[\rm 2.] There is at least one singular orbit corresponding to either a totally geodesic submanifold with a K\"{a}hler structure or a point. \end{enumerate}
		\end{theorem}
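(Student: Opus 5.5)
We plan to work with the identity component $G=\I_0(M)$ and use the standing facts for a non-trivial GRS: every Killing field $X$ satisfies $X(f)$ equal to a constant (hence $X(f)=0$ after passing to the subalgebra fixing $\nabla f$), and every Killing field preserves $J$ up to the known rigidity, as in \cite{Hung1}. The first step is to show that $G$ preserves $f$. Since $\nabla f$ is determined by $\Rc$ and $\lambda$ up to a constant, each $X$ in the Lie algebra $\mathfrak g$ gives $[X,\nabla f]=0$, so $X(f)=c_X$ is a constant. The map $X\mapsto c_X$ is a Lie algebra homomorphism to $\mathbb R$, whose kernel has codimension at most one. If $c_X\neq 0$ for some $X$, then the flow of $X$ translates $f$. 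We will rule this out by combining this with the dimension bound of \cite{Hung1} applied to the kernel, together with the fact that the kernel must preserve the nontrivial level sets. Alternatively, for shrinkers and expanders we use properness of $f$, and for steadies we use the identity $R+|\nabla f|^2=\text{const}$.

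The second step is the cohomogeneity. A regular level set $\Sigma=f^{-1}(c)$ is $G$-invariant and has real dimension $2n-1$. The isotropy of $G$ at $p\in\Sigma$ fixes $\nabla f$ and $J\nabla f$, so it embeds into $U(n-1)$, of dimension $(n-1)^2$. Moreover, $J\nabla f$ is itself Killing for a GKRS. Counting gives $\dim G\cdot p\geq n^2-1-(n-1)^2=2n-2$. We will exclude codimension one inside $\Sigma$ by a rigidity argument: if the orbits had dimension $2n-2$, the isotropy would be all of $U(n-1)$, and the resulting structure contradicts either $\dim\I\geq n^2-1$ or irreducibility. This mirrors the equality-case analysis in \cite{Hung1}. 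Hence $G$ acts transitively on each regular level set, and the action is of cohomogeneity one.

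The third step is the structure. On a principal orbit $P_c$, the unit normal $\nu=\nabla f/|\nabla f|$ and the vector field $\zeta=J\nu$, rescaled, give a contact-type structure. The Kähler condition and the $G$-invariance force the second fundamental form to have the form $a\,g$ on $\zeta^\perp$ and $b$ on $\zeta$. This is the standard cohomogeneity-one Kähler ansatz of the form $g=dt^2+h(t)^2\eta^2+k(t)^2 g_N$. As a consequence, $P_c$ is a homogeneous Sasakian manifold, up to a $D$-homothetic deformation, fibering over $N=P/S^1$ with the Kähler structure induced by $J$. The quotient $N$ is homogeneous Kähler under $G/S^1$. Because it is compact homogeneous Kähler and arises from a Sasakian structure with the required symmetry, $N$ is Kähler–Einstein; we may instead get this from the soliton ODE, which forces $\Rc_N$ to be proportional to $g_N$.

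For the singular orbit, $f$ attains a minimum (shrinker/steady) or maximum, or more generally a critical point. This follows from \cite{Hung1}-type results or from the completeness and properness of $f$. The critical set of $f$ is a totally geodesic, $J$-invariant submanifold, being the zero set of the holomorphic Killing field $J\nabla f$. So it is either a point or a Kähler submanifold. The main obstacle is step two: excluding the non-transitive case on level sets and handling the homomorphism $X\mapsto c_X$ in the steady case.
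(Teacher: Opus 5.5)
There is a genuine gap, and it sits where you flag it. Your Step~1 does not establish $G$-invariance of $f$ in the steady case. Your Step~2 then assumes that invariance (it starts from ``a regular level set $\Sigma$ is $G$-invariant''), so as ordered the argument is circular for $\lambda=0$.

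The identity $\scal+|\nabla f|^2=\mathrm{const}$ carries no information about $c_X$. For a Killing field $X$ one has $X(\scal)=0$ and $X(|\nabla f|^2)=2g([X,\nabla f],\nabla f)=0$ automatically. (For $\lambda\neq0$, the same computation applied to $\scal+|\nabla f|^2-2\lambda f$ gives $\lambda c_X=0$ at once, essentially Proposition~\ref{p2.1}. Properness of $f$ is not needed there, and for expanders it is not available without extra assumptions.)

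The paper avoids the circularity by reversing your order:
\begin{itemize}
\item Only the isotropy group needs to preserve $f$. For $\phi\in G_p$, $\Hess(f\circ\phi-f)=0$, so $f\circ\phi-f$ is constant by irreducibility and vanishes at $p$.
\item This already embeds $G_p$ into $U(\mathcal W^\perp)\cong U(n-1)$ (Lemmas~\ref{n3}, \ref{n5}) and yields cohomogeneity one (Proposition~\ref{m2.7}).
\item Only then is $f$-invariance proved for steadies, by a genuinely cohomogeneity-one argument (Theorem~\ref{t3.3}). Writing $g=dt^2+g_t$, one normalizes a basis of Killing fields so that $\nabla f=u\,\partial_t+u_1Z_1$, and shows $u,u_1,|Z_1|$ depend only on $t$.
\item Using $\Rc(\partial_t,X)=0$ for tangential $X$ gives $u_1$ constant. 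Then $u_1Z_1=\nabla\bigl(f-\int u\,dt\bigr)$ is a parallel field, hence zero, forcing $c_{Z_1}=0$.
\end{itemize}
Your kernel idea can be completed for $n\geq3$ by the same kind of isotropy count. But in complex dimension two, with $\dim\I(M)=3$ and trivial principal isotropy, nothing representation-theoretic forbids $c_X\neq0$. So some argument of this type is unavoidable.

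The remaining steps are asserted rather than proved, and each needs a specific idea.

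In Step~2, ``contradicts either $\dim\I\geq n^2-1$ or irreducibility'' is not the mechanism. Since $J\nabla f$ is Killing, $J\nabla f|_p\in T_p(G\cdot p)$, and it is fixed by $G_p$. If $G_p^0=U(n-1)$, which acts irreducibly and without fixed vectors on $\mathcal W^\perp$, then any $G_p$-invariant subspace of $T_p\Sigma=\mathbb R\,J\nabla f|_p\oplus\mathcal W^\perp$ containing $J\nabla f|_p$ has dimension $1$ or $2n-1$, never $2n-2$.

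In Step~3, the form of $S$ does not follow from ``K\"ahler plus $G$-invariance'' alone. You need $S\zeta\parallel\zeta$, which comes from $J$-invariance of $\Hess f$ together with $\nabla f$ being an eigenvector. On $\mathcal D$ you then need one of two arguments, as in Lemma~\ref{key}:
\begin{itemize}
\item for $n=2$, $SJ=JS$ with $\dim_{\mathbb C}\mathcal D=1$;
\item for $n\geq3$, the count $\dim G_p\geq(n-1)^2-1$, which forces $G_p^0\supseteq SU(n-1)$, and then Schur's lemma.
\end{itemize}
You must also exclude $\alpha=0$. By Theorem~\ref{hyperdeformedSasa}, that case gives a local product rather than a deformed Sasakian structure; the paper rules it out via $\tfrac{d}{dt}g^\perp_t=2g\circ S=0$ and irreducibility.

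Finally, $N$ need not be compact; the paper's own analysis in complex dimension two allows $N\cong\mathbb R^2$. So ``compact homogeneous K\"ahler'' is not available. Both the K\"ahler--Einstein property of $N$ and the existence of an end where $\alpha\to0$ (hence of a singular orbit) come from Theorem~\ref{HDcontact}, not from properness of $f$.
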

		
		\begin{remark}
			In fact, there is a more precise statement characterizing the metric on $M$ via the metric on $P$ in Theorem \ref{m1.1v2}. Indeed, it is reminiscent of classical Calabi-type ans\"atze for constructing K\"{a}hler metrics.  
			The structure and explicit form of the metric 
			result from the assumption on the dimension of the isometry group, reflecting a strong rigidity phenomenon at this level of symmetry.
		\end{remark}	
		
				\begin{remark}
		 It is classical that, on each regular level set of the potential function, $f$, the induced metric and almost complex structure give rise to an almost contact metric structure. Our observation is that, due to the almost maximal condition, it is a deformed homogeneous Sasakian manifold. A deformation re-scales the Reeb vector field and the transverse metric by different fixed factors on the same underlying manifold; see Definition \ref{Sdeform}. Furthermore, when a singular orbit corresponds to a point, then $\left(P, \eta, \zeta, \Phi, g_P\right)$ is the standard Sasakian sphere and $\left(N, g_N, J_N\right)$ is, up to homothety, isometric to a standard complex projective space.
		\end{remark}

The proof of Theorem \ref{m1.1} relies on two main ingredients: the fact that the isometric $G$-action turns out to be of cohomogeneity one and a precise characterization of the shape operator $S$ of a principal orbit. 
Of crucial importance is the following result, showing that 
the potential function $f$ is invariant by the cohomogeneity one action.  
	\begin{theorem}\label{t1.1}
			Let $\left(M, g, f\right)$ be a non-trivial GRS of real dimension $m \geq 3$. Suppose that  $\I(M)$ acts 
			on  $(M, g)$ by cohomogeneity one. Then the potential function, $f$, is invariant under the action.
	\end{theorem}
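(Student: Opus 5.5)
We must show that if $G=\I(M)$ acts on $(M,g)$ with cohomogeneity one and $(M,g,f)$ is a non-trivial GRS, then $X(f)=0$ for every Killing field $X$ generated by $G$. The approach is to use the standard fact that a Killing field changes $\nabla f$ by a parallel field, and then show that a non-zero parallel field is incompatible with non-triviality.

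First we show that $X(f)$ is constant for every Killing field $X$. For $\phi\in G$, the pullback $\phi^*f$ is again a potential for the same soliton, since $\phi^*\Rc=\Rc$ and $\phi^*g=g$. Hence $\Hess(\phi^*f-f)=0$. Differentiating along a one-parameter subgroup, the function $h_X:=X(f)=\mathcal L_X f$ satisfies $\Hess h_X=\mathcal L_X\Hess f=\mathcal L_X(\lambda g-\Rc)=0$. So $\nabla h_X$ is a parallel vector field.

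Next we rule out a non-zero parallel field. If $\nabla h_X\neq 0$ for some $X$, then $M$ carries a non-zero parallel vector field, and by de Rham the metric is locally reducible, splitting off a line. This contradicts the assumption that the soliton is non-trivial, i.e.\ locally irreducible. (If one prefers to avoid de Rham, Petersen--Wylie \cite{Pete2} gives that the soliton itself splits, which again contradicts non-rigidity.) Therefore $h_X\equiv c_X$ is constant for every $X$ in the Lie algebra $\mathfrak g$ of $G$.

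The main obstacle is to show that $c_X=0$. The map $X\mapsto c_X$ is linear, so it suffices to find, for each $X$, a point where $X(f)=0$. I would exploit compactness of orbits in two ways.

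\textbf{Case 1: $M/G$ is compact.} Then $M$ is compact, so $f$ attains a maximum at some point $p$. There $df_p=0$, hence $c_X=X(f)(p)=0$.

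\textbf{Case 2: $M/G$ is non-compact.} Restrict $f$ to a principal orbit $G\cdot p$. Orbits of a cohomogeneity one action by the full isometry group are compact when $M/G$ is an interval or a ray, since the orbits are homogeneous under the compact isotropy fibration; in general the orbits are closed, and we need compactness. If $G\cdot p$ is compact, then $f|_{G\cdot p}$ attains a maximum at some point $q$. At $q$ every vector tangent to the orbit is annihilated by $df$, and in particular $X(f)(q)=0$. This gives $c_X=0$.

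If the orbits are not compact, say homogeneous spaces $\R^k$-like, I would argue instead via the Lie algebra. Since $c$ is a Lie algebra homomorphism to $\R$, we have $c_{[X,Y]}=X(Y f)-Y(Xf)=0$, so $c$ vanishes on $[\mathfrak g,\mathfrak g]$. On an abelian part of $\mathfrak g$ we would then need an argument that non-compact orbits together with cohomogeneity one force flat directions, contradicting irreducibility. Alternatively, when $\lambda\neq0$ one can use the identity $\mathcal L_X f=c_X$ combined with the standard identity $\scal+|\nabla f|^2-2\lambda f=\mathrm{const}$. Since $\scal$ and $|\nabla f|^2$ are $G$-invariant (as $\nabla f$ changes only by the zero parallel field), this forces $\lambda c_X=0$. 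For $\lambda=0$, the orbit-maximum argument remains the fallback.

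Hence $f$ is $G$-invariant.
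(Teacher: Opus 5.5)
Your first step is correct and is the paper's Lemma~\ref{l3.1}: $\Hess(\mathcal L_X f)=\mathcal L_X(\lambda g-\Rc)=0$, together with local irreducibility, gives $X(f)\equiv c_X$. Your $\lambda\neq 0$ argument is also correct. Differentiating $\scal+|\nabla f|^2-2\lambda f=\mathrm{const}$ along $X$, using $[X,\nabla f]=\nabla c_X=0$, gives $\lambda c_X=0$. This is essentially the Petersen--Wylie argument \cite{Pete2} that the paper cites for that case, and it does not need cohomogeneity one.

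The gap is the steady case $\lambda=0$, which is exactly the case the paper identifies as its own contribution. There your only tool is boundedness of $f$ on a compact orbit, and the compactness claim is false. The orbit $G\cdot p\cong G/G_p$, with $G_p$ compact, is compact only if $G$ is, and nothing forces $\I(M)$ to be compact. Principal orbits modelled on quotients of $\mathrm{Nil}^3$ or $\widetilde{SL}(2,\R)$ (as in Section~\ref{S5}), or on $\R\times S^k$ under $\R\times SO(k+1)$, are non-compact. For the same reason, your Case~1 step ``$M/G$ compact $\Rightarrow$ $M$ compact'' is unjustified. Your Lie-algebra remark only shows that $c$ vanishes on $[\mathfrak g,\mathfrak g]$ (and, one may add, on every isotropy subalgebra $\mathfrak g_q$). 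It says nothing about, for example, an $\R$-factor of $\mathfrak g$ acting by translations along non-compact orbits. The sentence about non-compact orbits ``forcing flat directions'' states a hope, not an argument.

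What is missing is a mechanism specific to $\lambda=0$. The identity $\scal+|\nabla f|^2=\mathrm{const}$ no longer contains $f$, so it cannot detect $c_X$. Instead one must use the soliton equation in the mixed normal--tangential direction of the cohomogeneity-one metric $g=dt^2+g_t$. The paper does this as follows.
\begin{enumerate}
\item It normalizes a basis so that only $Z_1$ pairs non-trivially with $\nabla f$, and writes $\nabla f=u\,\partial_t+u_1Z_1$.
\item From $[Z_i,\partial_t]=[Z_i,\nabla f]=0$, $u$ is constant on orbits.
\item $G$-invariance of $|\nabla f|$, which is exactly what the steady identity provides, then makes $u_1$ and $|Z_1|$ constant on orbits.
\item The mixed component $\Hess f(\partial_t,X)=-\Rc(\partial_t,X)$, which the paper asserts vanishes by the Codazzi equation, yields $\partial_t u_1=0$, so $u_1$ is a global constant.
\item Then $u_1Z_1=\nabla\bigl(f-\int u\,dt\bigr)$ is both Killing and a gradient, hence parallel. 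Irreducibility forces $u_1=0$, i.e.\ $g(Z_1,\nabla f)=0$, a contradiction.
\end{enumerate}
You need some argument of this type, using the $(\partial_t,X)$-component of the soliton equation instead of compactness, to close your Case~2 when $\lambda=0$.
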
 
	\begin{remark}
	In case $\lambda \neq 0$, Theorem \ref{t1.1} is  an immediate consequence of Corollary 2.2 of \cite{Pete2}. Our contribution is for the steady case where the argument in \cite{Pete2} cannot be  applied directly. Thus, this result is of independent interest and it is applicable to not necessarily K\"{a}hler GRS. 
	\end{remark}
	
In complex dimension two, we observe strong rigidity. 
\begin{theorem}\label{t3.1}
	Let $(M,g,J,f,\lambda)$ be a non-trivial GKRS of complex dimension two and assume that  $\dim(\I(M))\geq 3$. Then, $M$ is constructed from a homogeneous Sasakian manifold of constant holomorphic sectional curvature and it is of maximal symmetry, that is, $\dim(\I(M))=4$. 
\end{theorem}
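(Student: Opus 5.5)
We apply Theorem \ref{m1.1} with $n=2$. Since $n^2-1=3$, the hypothesis $\dim(\I(M))\geq 3$ says that $\I(M)$ acts by cohomogeneity one and preserves $f$. Each principal orbit is then a $3$-dimensional homogeneous deformed Sasakian manifold $P$, fibering by circles over a homogeneous K\"ahler--Einstein Riemann surface $N$. We also know there is a singular orbit, which is either a point or a totally geodesic complex curve. The plan has two steps. First, show that $N$ has constant curvature, so that $P$ has constant holomorphic (transverse) sectional curvature. Second, show that the isometry group of $M$ contains the full automorphism group of this model, which has dimension $4$. The upper bound $\dim \I(M)\le n^2=4$ from \cite{Hung1} then forces equality.

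For the first step, $N$ is a homogeneous Riemann surface, so its Gaussian curvature is constant. Hence $N$ is, up to scaling, $\mathbb{CP}^1$, $\mathbb{C}$ (or a flat quotient), or the hyperbolic disc (or a quotient). The transverse K\"ahler metric of $P$ is therefore of constant holomorphic sectional curvature, automatically in real dimension two. The total space $P$ is one of three homogeneous Sasakian $3$-manifolds:
\begin{itemize}
\item the Berger-type $S^3$ or a lens quotient, which is $SU(2)$-type;
\item the Heisenberg group;
\item $\widetilde{SL}(2,\mathbb{R})$, or a quotient.
\end{itemize}
The deformation parameters of Definition \ref{Sdeform} rescale the Reeb direction and the transverse metric separately. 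Such a rescaling preserves the $\eta$-Einstein/Sasakian space form class, since it only changes the $\varphi$-sectional curvature constant. Each principal orbit is therefore a Sasakian space form of dimension three, whose automorphism group (isometries preserving $\eta,\zeta,\Phi$) has dimension $4$: for example $U(2)$, the Heisenberg group extended by $U(1)$, or $\widetilde{SL}(2,\mathbb{R})\times\mathbb{R}$ modulo centre.

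For the second step, we use the more precise form Theorem \ref{m1.1v2} of the metric. On the regular part it reads $g=dr^2+a(r)^2\eta^2+b(r)^2 \pi^*g_N$, a Calabi-type ansatz in which the complex structure maps $\partial_r$ to a multiple of $\zeta$. Every automorphism $\phi$ of the fixed Sasakian structure $(P,\eta,\zeta,\Phi,g_P)$ preserves $\eta$, $\zeta$ and $\pi^*g_N$. Extending $\phi$ trivially in $r$ therefore gives an isometry of the regular part that preserves $J$ and $f$, because $f=f(r)$ by Theorem \ref{t1.1}. By completeness this isometry extends across the singular orbits, using the smoothness conditions at the singular orbit. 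This yields a $4$-dimensional group of isometries, so $\dim\I(M)\ge 4$, and the upper bound gives $\dim\I(M)=4$.

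The main obstacle is the extension of the Sasakian automorphisms to $M$. The delicate point is whether the automorphism group of $P$, rather than merely the isometry group of a particular deformed orbit, acts on every orbit simultaneously, and whether it extends smoothly over the singular orbit. I would handle this as follows. In the point case, Remark 1.4 identifies $P$ with the standard $S^3$ and the action with the linear $U(2)$-action on $\mathbb{C}^2$, which visibly extends. When the singular orbit is a complex curve $\Sigma\cong N$, the $4$-dimensional group acts on $\Sigma$ through $\I(N)$, with the circle acting on the normal fibers, and again extends. A secondary point is to check that the identity component of the original $3$-dimensional group sits inside this $4$-dimensional one, so that no other homogeneous $3$-dimensional structure can occur.
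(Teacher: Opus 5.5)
Your overall architecture is sound: compute the Sasakian automorphism algebra of the model $P$, transport it to $M$ through the Calabi-type ansatz, and close with the bound $\dim\I(M)\le 4$. Your second step also makes explicit the passage from $\dim\mathrm{Aut}(P)=4$ to $\dim\I(M)=4$, which the paper leaves implicit.

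The gap is in the first step, where you assert that every candidate $P$ has a $4$-dimensional automorphism group. Your own list of homogeneous bases includes the flat quotients of $\mathbb{C}$, namely the flat torus $T^2$ and the flat cylinder $\mathbb{R}\times S^1$. Over these, the Sasakian circle bundle is a quotient $\Gamma\backslash \mathrm{Nil}^3$ of the Heisenberg group, not $\mathrm{Nil}^3$ itself. Only automorphisms of $\mathrm{Nil}^3$ that centralize $\Gamma$ descend, so the automorphism algebra is $1$-dimensional over $T^2$ and $2$-dimensional over $\mathbb{R}\times S^1$. Your list of three models silently drops these cases, and nothing in your argument excludes them. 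In those cases your second step would only produce a group of dimension at most $2$.

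The missing idea is the lifting obstruction, which is the heart of the paper's proof (Lemma \ref{liftsym} and Lemma \ref{countdim}). The projection $\mathfrak{aut}(P)\to\mathfrak{aut}(N,g_N,J_N)$ has kernel $\mathbb{R}\zeta$, and an automorphism $X$ of $N$ lies in the image if and only if the closed $1$-form $\omega_N(X,\cdot)$ is exact.
\begin{itemize}
\item When $H^1(N)=0$, every automorphism of $N$ lifts and $\dim\mathfrak{aut}(P)=3+1=4$. This covers $S^2$ and $\mathbb{C}$, and also the disc, which you rightly keep; the paper's argument discards $H^2$ as having no complex structure, which is incorrect, but it does not matter here.
\item On $T^2$ no nonzero translation lifts, and on $\mathbb{R}\times S^1$ only the rotation of the circle factor lifts. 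Hence $\dim\mathfrak{aut}(P)\le 2$.
\end{itemize}
The second bullet contradicts the fact that $G$, with $\dim G\ge 3$, acts effectively by automorphisms on each principal orbit. It fixes a point and preserves $\nabla f$, and a deformation does not change the automorphism algebra. Once you insert this computation before your second step, your proof goes through.

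A minor point: a general $(\pm,H,F)$-deformation is Sasakian only when $H=F^2$. So the principal orbits are deformations of a Sasakian space form rather than space forms themselves. This is harmless for counting automorphisms.
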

\begin{remark}
	The maximal symmetry of a GKRS is analyzed in \cite{Hung1}. The manifold is foliated by equidistant connected level sets of $f$. The regular leaves of the foliation coincide with the principal orbits which, with the induced almost contact metric structure, are deformations of a Sasakian manifold with maximal symmetry. 
	By work of Tanno \cite{Ta}, the principal orbit must be a quotient of a simply-connected Sasakian space form by a cyclic group along trajectories of the Reeb vector fields.

	Theorem \ref{t3.1} tells us that there is a gap for the isometry groups of $2$-dimensional GKRS: 
	either $\dim(\I(M))=4$ or $\dim(\I(M))\leq 2$.
	In particular, one concludes that $SU(2)$ cannot be the full isometry group of a GKRS surface; without the K\"{a}hler requirement, there is construction of expanding GRS with $SU(2)$ isometry group \cite{Don}. 
\end{remark}

A consequence of Theorem \ref{t3.1} is the following rigidity result in the non-expanding case. 

\begin{corollary}\label{c1.8}
	Let $(M, g, J, f, \lambda)$ be an irreducible, non-trivial, complete, connected, GKRS of complex dimension two. If $\lambda \geq 0$ and $\dim(\I(M))\geq 3$, then it is of maximal symmetry, that is, $\dim(\I(M))=4$ and each principal orbit must be the lens space $L_{p, 1}$ for some integer $p\ge1$.
\end{corollary}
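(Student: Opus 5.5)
By Theorem \ref{t3.1}, the hypothesis $\dim(\I(M))\geq 3$ already forces maximal symmetry, $\dim(\I(M))=4$. So the plan is to start from that conclusion and then identify the principal orbits.

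\textbf{Step 1: orbit structure.} From Theorem \ref{m1.1} and the remark after Theorem \ref{t3.1}, each principal orbit $P$ is a level set of $f$. With its induced structure, $P$ is a deformation of a homogeneous Sasakian $3$-manifold of constant holomorphic sectional curvature, which Tanno's work identifies with a quotient $\widetilde{P}/\Gamma$. Here $\widetilde{P}$ is a simply connected Sasakian space form, namely $S^3$, $\mathrm{Nil}^3$ or $\widetilde{SL}(2,\mathbb{R})$, according to the sign of the curvature of the base $N$. The group $\Gamma$ is cyclic and acts along the Reeb flow. The task is therefore to show that, for $\lambda\ge 0$, the base $N$ must be a round $S^2$. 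Once that is done, $\widetilde P=S^3$ with the Hopf fibration, and a cyclic subgroup of the Reeb circle $U(1)\subset U(2)$ of order $p$ gives exactly $L_{p,1}$ (with $p=1$ the sphere itself).

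\textbf{Step 2: the base must be compact.} I would use the cohomogeneity one structure from Theorem \ref{m1.1}(2): there is at least one singular orbit, which is either a point or a totally geodesic complex curve $\Sigma$.

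If the singular orbit is a point, the remark after Theorem \ref{m1.1} already gives the standard Sasakian $S^3$ with $N=\mathbb{CP}^1$. Smoothness at the point forces $\Gamma$ to act freely linearly near the fixed point, which gives $L_{p,1}$. In fact it gives $p=1$, unless orbifold points are allowed, but we only need the lens space form.

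If the singular orbit is a curve $\Sigma$, it is a homogeneous K\"ahler curve, and the principal orbit is a circle bundle over it. The symmetry group has dimension $4$, and the isotropy rotating the normal circle has dimension at most $1$, so $\Sigma$ must carry a $3$-dimensional isometry group. Hence $\Sigma$ is $S^2$, $\mathbb{R}^2$ or $H^2$.

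\textbf{Step 3: ruling out flat and hyperbolic bases (main obstacle).} This is where $\lambda\ge0$ enters. For a steady or shrinking GKRS, known results give $\mathrm{scal}\ge 0$ (Chen), together with the relation $\mathrm{scal}+|\nabla f|^2-2\lambda f=\mathrm{const}$.

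\emph{Hyperbolic base.} At a point of the totally geodesic $\Sigma$, I would use the Calabi-type ansatz of Theorem \ref{m1.1v2}. There the Ricci form in the $\Sigma$ directions is the curvature of $\Sigma$ plus a term coming from the circle-bundle warping, which vanishes to leading order on the singular orbit. The soliton equation restricted to $\Sigma$ then reads $\kappa_\Sigma+\Hess f|_\Sigma=\lambda$ with $\Hess f|_\Sigma$ determined by the profile. Combined with $\Delta f\le$ bounds at the minimum of $f$, this should force $\kappa_\Sigma>0$.

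\emph{Flat base.} If $\Sigma$ is flat and $\lambda\ge0$, I expect the ODE to force the soliton to split, which would make it rigid and contradict non-triviality. A steady soliton over $\mathbb{C}$ with a flat base should reduce to a product of $\mathbb{C}$ with a cigar, and that is reducible.

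I expect this curvature-sign argument at the singular orbit to be the hardest part. I would first try to do it with the explicit ODE of the ansatz, evaluating the soliton equation at the minimum point of $f$, which for $\lambda\ge0$ lies on the singular orbit.

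\textbf{Step 4: conclusion.} Once $N=S^2$, the principal orbit is $S^3/\mathbb{Z}_p$ with $\mathbb{Z}_p$ contained in the Reeb circle. This is $L_{p,1}$, which completes the proof.
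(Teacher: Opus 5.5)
The gap is in Step 3. Your reduction otherwise matches the paper's. Beyond Theorem \ref{t3.1}, the real content of the corollary is that $\lambda\ge0$ forces the Einstein constant $k$ of $N$ to be positive, so $N\cong S^2$. After that, identifying the principal orbit is routine: the paper uses Steenrod's classification of circle bundles over $S^2$ and excludes $S^1\times S^2$ by irreducibility, and your cyclic Reeb quotient of $S^3$ works equally well.

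In Step 3, ``this should force $\kappa_\Sigma>0$'' and ``I expect the ODE to force the soliton to split'' are not arguments. This is exactly the step the paper imports from the ODE analysis in \cite{Hung1}.

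The local heuristic you sketch is also incorrect. Along $\Sigma$ one has $\nabla f=0$, so $\Hess f|_{T\Sigma}=0$ and the soliton equation just says $\Rc_M|_{T\Sigma}=\lambda g$. The mixed curvatures $K(X,\nu)$ coming from the normal circle bundle do \emph{not} vanish on $\Sigma$; Eguchi--Hanson is Ricci-flat with a round zero section. In the ansatz of Theorem \ref{m1.1v2}, this equation is exactly \eqref{o1.1} at an endpoint $s_0$ with $\alpha(s_0)=0$, i.e.\ $k=\lambda(2s_0+A)+\alpha'(s_0)$. Smoothness forces $\alpha'(s_0)\neq0$, positive if $s_0$ is the left end of $I$ and negative if it is the right end. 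So evaluating at a singular curve gives $k>0$ only when that curve sits at the left end of $I$; at the right end the local equation tells you nothing.

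The missing idea is to work at the left endpoint $s_1$ of $I$, using completeness together with $|\nabla f|^2=B^2\alpha$. This endpoint is finite because $2s+A>0$ on $I$.

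If $2s_1+A>0$, equation \eqref{o1.1} is regular at $s_1$, so completeness forces $\alpha(s_1)=0$. Hence $0\le\alpha'(s_1)=k-\lambda(2s_1+A)$. If $k\le0$ this gives $k=\lambda=0$, and then \eqref{o1.1} is linear homogeneous in $\alpha$, so $\alpha\equiv0$, a contradiction.

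If $2s_1+A=0$, the homogeneous branch $c\,(s-s_1)^{-n}e^{Bs}$ is excluded by positivity and completeness. The remaining solutions behave like a positive multiple of $k(s-s_1)$, or of $-\lambda(s-s_1)^2$ when $k=0$, and vanish identically when $k=\lambda=0$. So positivity of $\alpha$ again forces $k>0$.

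This disposes of flat and hyperbolic bases uniformly. Your ``$\mathbb{C}\times$cigar'' picture for the flat case does not fit Theorem \ref{m1.1v2} anyway, since there $d\eta=0$ rather than $d\eta=\pi^*\omega_N$.
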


\noindent{\bf Organization.}
The paper is organized as follows. We'll start with a preliminary section and then study cohomogeneity one GRS and prove Theorem \ref{t1.1} in Section \ref{coho_one}. 
In Section \ref{S4}, we consider GKRS whose isometry group has dimension at least $n^2-1$, and establish Theorem \ref{m1.1v2}, whose proof implies Theorem \ref{m1.1}. 
The first step is to show that the assumption will lead to an action of cohomogeneity one. Then, applying Theorem \ref{t1.1}, orbits of the action coincide with level sets of the potential function $f$. The global K\"{a}hler structure indeed induces a natural, almost contact metric structure on each regular level set. The high degree of symmetry on such level sets leads to precise calculation of the shape operator and, consequently, each regular level set is a deformation of a fixed homogeneous Sasakian model, giving the desired conclusion. Section \ref{S5} is devoted to the proof of Theorem \ref{t3.1} and Corollary \ref{c1.8}. From the proof of Theorem \ref{m1.1v2}, we know that the almost contact metric structure on each regular level set is a deformation of a Sasakian model. The argument to prove Theorem \ref{t3.1} is essentially based on the relationship between Killing vector fields 
 on this Sasakian model, $P$, and their projection to the quotient leaf space, $N$.  Since the obstruction to lifting the symmetry from $N$ to $P$ is given by the first cohomology group of $N$, we may combine this information with the assumption on the dimension of $\I(M)$ to prove the result. 
\section{Preliminaries}
In this section, we gather definitions and basic results from  transformation groups, almost contact 
metric structures and gradient K\"ahler Ricci solitons, and the lifting and projection of symmetry in such spaces.
\subsection{Transformation Groups}

\quad\quad We first recall some facts about transformation groups. For more details, we refer the reader to Alexandrino and Bettiol \cite{BA}, Bredon \cite{Bre}, and Kobayashi \cite{Ko3}. Let $G$ be a group acting on a manifold $M$. For a point $p\in M$, the subgroup
$G_p:=\{\phi \in G \mid \phi \cdot p=p\} \subset G$
is called isotropy group or stabilizer of $p \in M$, and
$G \cdot p:=\{\phi \cdot p \mid \phi \in G\} \subset M$ 
is called the orbit of $p \in M$.  Where convenient, we write the orbit as $G/G_p$.
We say that an orbit is {\em principal} if its isotropy subgroup is minimal among all isotropy subgroups and, in this case, we call it the {\em principal isotropy subgroup}. We say that an orbit is   {\em singular} if its dimension is strictly less than that of a principal orbit and {\em exceptional} if the orbit is non-principal and has the same dimension as a principal orbit.
Let $M^{\reg}$ denote the regular part of $M$, corresponding to the union of all principal $G$-orbits.
$M^{\reg}$ forms an open and dense subset of $M$.

On a Riemannian manifold, an action is called isometric if it preserves the metric. We first recall the following well-known result; see Theorem 1.6.10 in Boyer and Galicki \cite{CK} and Proposition 3.62 in \cite{BA}.
\begin{lemma}\label{p1}
	Let $(M, g)$ be a connected Riemannian manifold. Then the group of isometries, $\I(M)$, is a finite-dimensional Lie group. The action of $\I(M)$ or any of its closed subgroup is proper.
\end{lemma}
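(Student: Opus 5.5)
The plan is to reduce both assertions of Lemma~\ref{p1} to the classical Myers--Steenrod theorem together with the standard properness criterion for isometric actions. The argument is organized in three steps.

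\emph{Step 1: the Lie group structure.} We equip $\I(M)$ with the compact-open topology. By Myers--Steenrod \cite{MyS}, or via Kobayashi's $G$-structure argument \cite{Ko3}, $\I(M)$ is a Lie group. Kobayashi's approach makes finite dimensionality transparent. Fix an orthonormal frame $u_0\in O(M)$ at a point $p_0$. The map $\phi\mapsto d\phi(u_0)$ embeds $\I(M)$ as a closed submanifold of $O(M)$. Injectivity holds because an isometry of a connected manifold is determined by its value and differential at one point: the set where two isometries agree to first order is open, via the exponential map, and also closed. Hence $\dim \I(M)\le \dim O(M)=\tfrac12 m(m+1)$, which is finite.

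\emph{Step 2: properness of the full group.} We need to show that for compact sets $K,L\subset M$, the set $\{\phi\in \I(M): \phi(K)\cap L\neq\emptyset\}$ is compact. Take a sequence $\phi_k$ in this set with points $x_k\in K$ satisfying $\phi_k(x_k)\in L$, and pass to a subsequence along which $x_k\to x$ and $\phi_k(x_k)\to y$.
\begin{itemize}
\item Since each $\phi_k$ is $1$-Lipschitz, $\phi_k(x)$ stays in a bounded neighborhood of $y$. For any $z\in M$, the points $\phi_k(z)$ then lie in a bounded set.
\item Closed bounded sets need not be compact when $M$ is incomplete. We handle this by using the uniform size of normal neighbourhoods near $x$ and $y$, propagated along chains of small balls, exactly as in Myers--Steenrod. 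Alternatively, in our setting $M$ is complete, so Hopf--Rinow gives compactness of closed bounded sets directly.
\item Arzel\`a--Ascoli then gives a subsequence converging uniformly on compacta to a distance-preserving map $\phi$.
\item Applying the same argument to $\phi_k^{-1}$ shows that $\phi$ is surjective. Hence $\phi\in\I(M)$, and $\phi_k\to\phi$ in the compact-open topology.
\end{itemize}
This is exactly sequential compactness of the set above, so the action is proper.

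\emph{Step 3: closed subgroups.} Let $H\subset \I(M)$ be a closed subgroup. The restriction of a proper action to $H$ is proper, because the relevant set $\{h\in H: h(K)\cap L\neq\emptyset\}$ is the intersection of a compact set with a closed set. By Cartan's closed subgroup theorem, $H$ is itself a Lie group.

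The main obstacle is the Arzel\`a--Ascoli step in the possibly incomplete case, specifically ruling out escape of $\phi_k(z)$ to the metric boundary. Since the paper assumes completeness throughout, I would simply invoke Hopf--Rinow there and cite \cite{CK, BA} for the general statement.
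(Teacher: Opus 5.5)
Your proposal is correct and takes the same route as the paper, which gives no argument of its own and simply invokes the classical Myers--Steenrod/Kobayashi theorem and the standard properness result for closed subgroups of the isometry group (Theorem 1.6.10 of \cite{CK} and Proposition 3.62 of \cite{BA}); your Arzel\`a--Ascoli sketch is the standard argument behind those citations. The one caveat is that the lemma does not assume completeness, so for incomplete $M$ your Step 2 still rests on those same citations, just as the paper's does: the ingredient you leave unproved is that isometries preserve the radius of normal balls, which keeps the points $\phi_k(z)$ from escaping to the metric boundary.
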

 
The next lemma is a combination of Corollary 21.8 in Lee \cite{Lee} and  Proposition 3.41 and Theorem 3.49 in \cite{BA}.

\begin{lemma}\label{c2.6}
	Let $G$ be a  Lie group acting properly and isometrically on a manifold $M$. Then for all $p \in M$, the orbit $G \cdot p$ is closed in $M$, and the isotropy group $G_p$ is compact. Furthermore, the tangent space $T_p(G \cdot p)$ is $G_p$-invariant.
\end{lemma}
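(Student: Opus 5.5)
The plan is to unwind the definition of a proper action directly. Recall that the action of $G$ on $M$ is proper if the map
$$\Theta\colon G\times M\to M\times M,\qquad (\phi,x)\mapsto (\phi\cdot x,\,x),$$
is proper, i.e.\ preimages of compact sets are compact. Each of the three assertions will follow from this, together with the standard fact that orbits of Lie group actions are images of the orbit maps. The isometric hypothesis is not really needed beyond guaranteeing smoothness (via Lemma \ref{p1}).

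\emph{Compactness of $G_p$.} Observe that $\Theta^{-1}(\{(p,p)\})=G_p\times\{p\}$. Since $\{(p,p)\}$ is compact, properness gives that $G_p\times\{p\}$ is compact. Projecting to the first factor shows that $G_p$ is compact.

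\emph{Closedness of $G\cdot p$.} Let $\phi_k\cdot p\to q$ in $M$. The set $K=\{(\phi_k\cdot p,\,p)\}_k\cup\{(q,p)\}$ is compact in $M\times M$. Hence $\Theta^{-1}(K)$ is compact and contains all $(\phi_k,p)$. After passing to a subsequence, $\phi_k\to\phi\in G$, and continuity gives $q=\phi\cdot p\in G\cdot p$.

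\emph{Invariance of $T_p(G\cdot p)$.}
\begin{enumerate}
\item[(a)] First I would make the tangent space meaningful. The orbit map $G/G_p\to M$, $[\phi]\mapsto\phi\cdot p$, is an injective immersion by equivariance and constant rank. It is also proper, by the same argument as above. A proper injective immersion is an embedding, so $G\cdot p$ is an embedded closed submanifold.
\item[(b)] From (a), $T_p(G\cdot p)=\{X^*_p : X\in\mathfrak g\}$, where $X^*_p=\frac{d}{dt}\big|_{t=0}\exp(tX)\cdot p$.
\item[(c)] For $h\in G_p$, the map $x\mapsto h\cdot x$ preserves $G\cdot p$ and fixes $p$. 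Therefore $dh_p$ maps $T_p(G\cdot p)$ into itself.
\item[(d)] Explicitly,
$$dh_p(X^*_p)=\frac{d}{dt}\Big|_{t=0} h\exp(tX)h^{-1}\cdot p=(\mathrm{Ad}_h X)^*_p,$$
which uses $h^{-1}\cdot p=p$.
\end{enumerate}

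The only step requiring any care is the embeddedness in (a), so that $T_p(G\cdot p)$ is unambiguous. I would handle it with the properness argument, using the standard fact that a proper injective immersion is a closed embedding. The remaining steps are routine.
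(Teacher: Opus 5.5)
Your argument is correct and complete: unwinding properness of $(\phi,x)\mapsto(\phi\cdot x,\,x)$ gives compactness of $G_p$, closedness of $G\cdot p$, and the closed embedding $G/G_p\to M$, and (d) then gives the $G_p$-invariance of $T_p(G\cdot p)$. The paper gives no proof of its own. It simply cites Corollary 21.8 of Lee and Proposition 3.41 and Theorem 3.49 of Alexandrino--Bettiol, and your derivation is essentially the standard textbook argument behind those citations.
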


\noindent In particular, it is well-known that  $G_p$ acts {\em linearly} on $T_p M$ via the differential of any group element, $\phi\in G_p$, $(d \phi)_p: T_p M \rightarrow T_p M$, and is called the {\em linear isotropy representation of $\phi$}. \\

Let $(M, g, J, f, \lambda)$ be a GKRS and let $G$ denote the identity component of $\I (M)$. It is immediate that $G$ is the largest connected group of $\I(M)$. The following lemma will be crucial.
\begin{lemma}\label{c3.1}
	Suppose that $(M, g, J, f, \lambda)$ be a non-trivial GKRS. Then $G$ preserves the complex structure $J$.
\end{lemma}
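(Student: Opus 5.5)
The plan is to show that every Killing field $X$ of $g$ is real holomorphic, i.e. $\mathcal{L}_X J=0$. Since $G$ is connected, it is generated by flows of Killing fields, so this gives the statement.

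\emph{Step 1: reduce to the $f$-preserving part.} Let $X$ be Killing. Then $\mathcal{L}_X \Rc=0$. The soliton equation \eqref{grs} therefore gives $\Hess(X f)=\mathcal{L}_X \Hess f=0$, so $\nabla(Xf)$ is a parallel vector field. Non-triviality means $M$ is locally irreducible and non-Einstein. A nonzero parallel field would split off a line, contradicting irreducibility, so $\nabla (Xf)=0$ and $Xf=c_X$ is constant. If $c_X\neq 0$, I would follow the standard argument of Petersen--Wylie: $X f = c$ together with $\Hess f = \lambda g - \Rc$ forces either $\lambda=0$ with $\nabla f$ behaving like a translation, producing a splitting, or, for $\lambda\neq0$, a contradiction with the $\lambda$-term. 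I would rather not rely on that case split. It is enough to note that the argument below only uses $\mathcal{L}_X\Rc=0$ and $\mathcal{L}_X\nabla f$ expressed through $\Hess(Xf)=0$, so constant $Xf$ is harmless.

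\emph{Step 2: the key identity.} On a GKRS, $\nabla f$ is real holomorphic, so $J\nabla f$ is Killing. The Ricci form $\rho=\Rc(J\cdot,\cdot)$ satisfies $\rho=\lambda\omega-i\partial\bar\partial f$. The goal is to show that $\omega$ is determined by Riemannian data invariant under $X$. Consider the endomorphism $A=\mathrm{Ric}-\lambda\,\mathrm{Id}=-\nabla^2 f$, which commutes with $J$. I would express $J$ through the Killing field $J\nabla f$: its covariant derivative $\nabla(J\nabla f)=J\circ\nabla^2 f=-J A$ is a skew endomorphism built from $J$ and $A$. On the open dense set where $A$ is nonzero, the plan is to recover $J$ on each eigenspace of $A$ from $\nabla(J\nabla f)$ and $A$. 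Then one checks that $\mathcal{L}_X(J\nabla f)$ is again Killing with controlled behavior. Non-Einstein guarantees $A\not\equiv 0$, and analyticity of solitons extends the identity to all of $M$.

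\emph{Step 3: the cleaner route I would actually try first.} Use holonomy. Since $M$ is irreducible and Kähler and not Ricci-flat (it is non-Einstein), the restricted holonomy is $U(n)$, not $SU(n)$ or $Sp$. Hence the only parallel complex structures are $\pm J$. Here I take hyperkähler to be excluded because hyperkähler metrics are Ricci-flat, hence Einstein. For a Killing field $X$, the flow $\phi_t$ maps the parallel complex structure $J$ to the parallel complex structure $\phi_t^*J$, which must be $\pm J$. By continuity in $t$ and $\phi_0=\mathrm{id}$, we get $\phi_t^*J=J$. Every element of the connected group $G$ is a product of such flow maps, so $G$ preserves $J$. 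This argument only needs irreducibility and non-Ricci-flatness, so the main obstacle is rigorously invoking the holonomy classification, which uses local irreducibility, together with de Rham/Berger on the possibly non-simply-connected $M$ via the restricted holonomy group. Steps 1 and 2 are a fallback if that becomes delicate.
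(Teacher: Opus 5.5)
Your Step 3 is correct and is essentially the paper's proof: the paper cites Lichnerowicz's theorem that on an irreducible K\"ahler manifold with $n$ odd or $\Rc\not\equiv 0$ the identity component of the isometry group preserves $J$, and that theorem is exactly your argument. Namely, the commutant of the irreducible restricted holonomy is $\{a+bJ\}$ rather than quaternionic (the quaternionic case would force Ricci-flatness), so $\phi_t^*J=\pm J$, and continuity in $t$ gives $+J$.

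You only need this commutant statement, not that the holonomy is all of $U(n)$. As you justify it, the $U(n)$ claim overlooks irreducible Hermitian symmetric spaces such as complex quadrics, which are excluded here only because they are Einstein. Steps 1--2 should be dropped: Step 2 is circular, since recovering $J$ from $\nabla(J\nabla f)$ presupposes that $X$ preserves $J\nabla f$, and it cannot determine $J$ on $\ker A$.
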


\begin{proof}	By going to the universal if necessary, one can assume that the structure is irreducible. By Lemma \ref{p1}, $G$ acts properly and isometrically on $(M, g)$. For an irreducible K\"{a}hler manifold, it follows from the work of Lichnerowicz \cite{Li}  that $G$ preserves the complex structure $J$ if $n$ is odd, or if $n$ is even and the Ricci tensor Rc is non-vanishing. As the soliton considered here is non-trivial, we have that $\mathrm{Rc} \not \equiv 0$ and the conclusion follows. \end{proof}

A Lie group acting on a manifold $M$ induces a Lie algebra homomorphism from the Lie algebra of the group into the Lie algebra of smooth vector fields on the manifold. If the group preserves 
a given tensorial structure $T$ on $M$ then, $\mathcal{L}_X T=0$ where $X$ is in the image of the homomorphism. In case of a Riemannian metric, the vector field is called {\em Killing}. For an almost Hermitian structure $(M, g, J)$, the vector field is called an {\em infinitesimal automorphism}. We adopt the same terminology for an almost contact metric structure as described below.  
	
\subsection{Almost Contact Metric Structure and GKRS}
\label{acms}
\quad\quad In this subsection, we give a brief introduction to almost contact geometry and see how it arises in our setup. The main reference we use here is \cite{CK}. 
\begin{definition} 
	An odd-dimensional manifold $P$ is called {\em almost contact} if there exists a triple $(\zeta, \eta, \Phi)$, where $\zeta$ is a vector field,  $\eta$ is a $1$-form, and $\Phi$ is a tensor field of type $(1, 1)$  such that, everywhere on $P$ we have  
	\[\eta(\zeta)=1 \textrm{  and } \Phi^2=-\text{Identity}+\zeta\otimes \eta.\]
	If, in addition, there is a Riemannian metric $g$ compatible with $\Phi$, that is, 
	$$g(\Phi(X), \Phi(Y))=g(X, Y)-\eta(X)\eta(Y),$$
	then $(P, g, \zeta, \eta, \Phi)$ is called an {\em almost contact metric structure}.
\end{definition}
\begin{definition}
	A smooth vector field $X$ on $P$ is an infinitesimal automorphism of the almost contact metric structure if its Lie derivative preserves each component of the structure: 
	\begin{align*}
		\mathcal{L}_X g &= 0,~~ \mathcal{L}_X \zeta = 0, ~~{\textrm {and}} ~~	\mathcal{L}_X \Phi = 0.
	\end{align*}
	Integrating these vector fields forms the automorphism group of the structure.  
\end{definition}
\noindent It is immediate from the definition that $\zeta$, also called the {\em Reeb vector field}, is nowhere vanishing and, by the Frobenius theorem,  
generates a $1$-dimensional foliation $\mathcal{F}$. 
If $\zeta$ is a Killing vector field, then the foliation is Riemannian and the transverse geometry is significantly more tractable. Indeed, the subspace of the tangent space perpendicular to $\zeta$ is called the transverse subspace. The holomorphic sectional curvature of an almost contact metric structure refers to the sectional curvature of a $\Phi$-invariant tangent plane in the transverse subspace. It is also natural to consider the transverse metric $g^\perp$, which is the restriction of $g$ to the transverse subspace, that is, 
\[g=g^\perp +\eta\otimes \eta. \]

\begin{definition}
	An almost contact metric structure $(P, \zeta, \eta, \Phi, g)$ is called {\em contact} if $$g(X, \Phi(Y))=d\eta(X, Y).$$
	Additionally, a contact metric manifold $(P,\zeta, \eta, \Phi, g)$ is called {\em K-contact} if $\zeta$ is Killing, that is,  $\mathcal{L}_{\zeta}g=0.$
\end{definition}

\begin{definition}
	A {\em Sasakian structure} is a contact metric structure such that the cone over $P$, $(P\times \mathbb{R}^+, r^2 g+ dr^2)$ is K\"{a}hler. Moreover, the K\"{a}hler form is given by $d(r^2\eta)$. 
\end{definition}
\noindent Indeed, a Sasakian structure is $K$-contact and, furthermore, satisfies an integrability condition.\\

For a K\"{a}hler manifold, a regular level set of a non-trivial real function on $M$ admits %with 
an almost contact metric structure in the following way. Let $(M, g, J)$ be a K\"{a}hler manifold with a smooth non-trivial function $f: M\mapsto \mathbb{R}$. 
For a regular value $c\in f(M)$, denote by $\Sigma_c$ its corresponding level set and let $$V:=\frac{\nabla f}{|\nabla f|},\,\, g_c:=g_{\mid \Sigma_c},\,\,  \zeta:=-J(V),\,\,  \eta(\cdot):=g\left(\cdot, \zeta\right),\,\, \textrm{and}\,\, \Phi(\cdot):=-\eta(\cdot) V+J(\cdot).$$
Then one can verify that $\left(\Sigma_c, g_c, \zeta, \eta, \Phi\right)$ defines an almost contact metric structure on $\Sigma_c$. Furthermore, the automorphism group preserving $f$ on $(M, g, J, f)$ descends to one on the level set with the induced structure tensors by Theorem 4.6 in \cite{Hung1}. Thus, in the study of GKRS, the geometry of level sets of the potential function $f$ is intrinsically intertwined with that of the global manifold. That principle is developed in \cite{Hung0, Hung1} and the following results are important in our proof.  \\

First, we recall a transformation introduced in 
\cite{Ta}.

\begin{definition}
\label{Sdeform}
For $H, F\in \mathbb{R}^+$, a $(\pm, H, F)$-deformation is given by 
\[ \zeta^\ast =\frac{\zeta}{H}, ~~~ \eta^\ast = H\eta, ~~~ \Phi^\ast=\pm\Phi,~~~ g^\ast=  F^2 g +(H^2-F^2)\eta\otimes \eta. \]
A {\em deformed contact/Sasakian metric structure} is obtained via an $(\pm, H, F)$-deformation of a contact/Sasakian metric structure.  
\end{definition}
\begin{remark}
	It follows that the $(\pm, H, F)$-deformation preserves the automorphism group of a complete almost contact metric structure. That is, a vector field $X$ is an infinitesimal automorphism for a deformation if and only if it is an infinitesimal automorphism of the original structure.  
\end{remark}

Next, let $S$ denote the shape operator of $\Sigma_c\subset M,$
\[SX:= \nabla_X V.\] We recall Theorem F in \cite{Hung0}.
\begin{theorem}
	\label{hyperdeformedSasa}{\rm\cite{Hung0}}
	Let $(P, \eta, \zeta, \Phi, g)$ be an almost contact manifold. Then  $$S=\alpha \text{Id}+\beta \zeta \otimes \eta,$$ for constants $\alpha$ and $\beta$ if and only if $(P, \eta, \zeta, \Phi, g)$ is
	\begin{enumerate}
		\item[\rm 1.]  A deformed-Sasakian structure when $\alpha\neq 0$; and
		\item[\rm 2.]  Locally a Riemannian product when $\alpha=0$,. 
	\end{enumerate}
	
\end{theorem}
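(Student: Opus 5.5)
The plan is to translate the condition on the shape operator into formulas for the Levi-Civita derivatives of $\zeta$ and $\Phi$ on the level set. We then compare these with the characterizing identities of a Sasakian structure,
\[
\nabla_X\zeta=-\Phi X, \qquad (\nabla_X\Phi)Y=g(X,Y)\zeta-\eta(Y)X,
\]
up to the conventions of \cite{CK}. Here $P=\Sigma_c$ sits as a hypersurface in the K\"ahler manifold $(M,g,J)$, with unit normal $V$, $\zeta=-JV$ and $\Phi=J-\eta\otimes V$ as in Subsection \ref{acms}. Write $\nabla^P$ for the induced connection, so that by the Gauss formula $\nabla_XY=\nabla^P_XY-g(SX,Y)V$.

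\textbf{Step 1: derivatives of $\zeta$ and $\Phi$.} Since $\nabla J=0$, we have $\nabla_X\zeta=-J(SX)$. Substituting $SX=\alpha X+\beta\eta(X)\zeta$ and using $J\zeta=V$ together with $JX=\Phi X+\eta(X)V$, the tangential part gives
\[
\nabla^P_X\zeta=-\alpha\Phi X .
\]
Similarly, expand $J(\nabla_XY)=\nabla_X(JY)$ into tangential and normal parts and substitute $S$. This should yield
\[
(\nabla^P_X\Phi)Y=\alpha\bigl(g(X,Y)\zeta-\eta(Y)X\bigr)+\alpha\beta\,\eta(X)\eta(Y)\zeta ,
\]
or something of this shape; the $\beta$-terms only involve $\eta\otimes\eta$ and are exactly what the $g^\ast$ correction in Definition \ref{Sdeform} is designed to absorb. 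Because $\Phi$ is skew, $\zeta$ is Killing, and $d\eta(X,Y)$ is a constant multiple $(-2\alpha$, up to convention$)$ of $g(\Phi X,Y)$.

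\textbf{Step 2: the case $\alpha\neq0$.} Choose a sign $\pm=\mathrm{sgn}(\alpha)$ and constants $H,F>0$, determined by $\alpha$ and $\beta$ by matching coefficients, so that the $(\pm,H,F)$-deformation (or its inverse) of Definition \ref{Sdeform} turns the two identities of Step 1 into the Sasakian identities. Tanno's formulas \cite{Ta} for how $\nabla$, $\eta$ and $g$ transform under such a deformation reduce this to solving two scalar equations. The resulting structure is contact with Killing Reeb field and satisfies the $\nabla\Phi$ identity, hence is Sasakian, so $P$ is a deformed-Sasakian structure. Conversely, for a deformed-Sasakian structure sitting as a level hypersurface, the same computation run backwards recovers $S$ from $\nabla^P\zeta$ on the transverse part. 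The Codazzi equation together with the K\"ahler condition should then force $S\zeta$ to be a constant multiple of $\zeta$, giving the converse.

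\textbf{Step 3: the case $\alpha=0$.} Step 1 gives $\nabla^P\zeta=0$, so $\zeta$ is a parallel unit field. Its orthogonal distribution is then parallel and $\Phi$-invariant, and the local de Rham decomposition shows that $P$ is locally a Riemannian product of a line and a K\"ahler leaf. Conversely, a parallel $\zeta$ forces $J(SX)$ to be normal for all $X$, so $SX\in\mathrm{span}(\zeta)$ and $S=\beta\zeta\otimes\eta$.

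\textbf{Main obstacle.} I expect the hardest part to be the bookkeeping in Step 2: getting the exact normalization of $H$ and $F$ (and the sign convention for $d\eta$) so that the $\beta$-term is fully absorbed. The converse direction is also delicate, because the $\zeta\zeta$-component of $S$ is not intrinsic to $P$ and must come from Codazzi-type constancy arguments.
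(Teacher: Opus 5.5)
The paper gives no proof of this statement. It is quoted as Theorem F of \cite{Hung0}, and only the forward implication is used (in the proof of Theorem \ref{m1.1v2}, with $\alpha,\beta$ constant on each level set by Lemma \ref{key}).

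\textbf{Forward direction.} Your outline is right, but Step 1 is miscomputed, and this pushes Step 2 in the wrong direction. Taking tangential parts of $\nabla_X(JY)=J\nabla_XY$ gives exactly
\[
(\nabla^P_X\Phi)Y=g(SX,Y)\zeta-\eta(Y)\,SX=\alpha\bigl(g(X,Y)\zeta-\eta(Y)X\bigr),
\]
so the $\beta$-terms cancel. Your extra term $\alpha\beta\,\eta(X)\eta(Y)\zeta$ cannot occur, since $(\nabla^P_\zeta\Phi)\zeta=-\Phi(\nabla^P_\zeta\zeta)=0$ for every almost contact metric structure. There is therefore nothing for an $H\neq F$ deformation to absorb, and matching coefficients against your formula would give an inconsistent system. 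Step 2 is just a homothety. The structure $g^\ast=\alpha^2g$, $\zeta^\ast=\zeta/|\alpha|$, $\eta^\ast=|\alpha|\eta$, $\Phi^\ast=\mathrm{sgn}(\alpha)\Phi$ has the same Levi-Civita connection and satisfies $(\nabla_X\Phi^\ast)Y=g^\ast(X,Y)\zeta^\ast-\eta^\ast(Y)X$. It is therefore Sasakian by Blair's characterization (modulo your normalization of $d\eta$), and the induced structure is its $(\mathrm{sgn}\,\alpha,1/|\alpha|,1/|\alpha|)$-deformation. No Tanno formulas are needed, and constancy of $\beta$ is never used. Your forward argument in Step 3 is fine.

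\textbf{The converse: this is the genuine gap.} You claim that Codazzi plus the K\"ahler condition forces $S\zeta$ to be a \emph{constant} multiple of $\zeta$. This fails, because Codazzi contains the ambient term $(R(X,Y)V)^\top$, which is unconstrained for a general K\"ahler $M$. Here is what running Step 1 backwards does give. A deformed-Sasakian structure has Killing Reeb field with $d\eta$ a constant multiple of $g(\cdot,\Phi\cdot)$, so $\nabla^P\zeta=-\alpha\Phi$ with $\alpha$ constant (and $\alpha=0$ when $\zeta$ is parallel). This yields $\Phi\circ(S-\alpha\,\mathrm{Id})=0$, hence $S=\alpha\,\mathrm{Id}+\beta\,\zeta\otimes\eta$ by symmetry. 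So $S\zeta\parallel\zeta$ comes for free, but $\beta=g(S\zeta,\zeta)-\alpha$ is only a function, and the induced structure does not determine it. Two examples:
\begin{enumerate}
\item[(a)] For $\alpha=0$: in flat $\mathbb{C}^n$, take $\gamma\times\mathbb{C}^{n-1}$ with $\gamma\subset\mathbb{C}$ a curve of non-constant curvature $\kappa$. It has parallel $\zeta$ and $S=\pm\kappa\,\zeta\otimes\eta$.
\item[(b)] For $\alpha\neq0$: near the unit sphere $P\subset\mathbb{C}^n$, replace the flat K\"ahler form by $\omega+i\partial\bar\partial(\rho^3h)$, with $\rho=|z|^2-1$ and $h$ arbitrary. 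The metric is unchanged along $P$, so the induced structure is still the standard Sasakian one. The form changes by $6h\rho\,i\partial\rho\wedge\bar\partial\rho+O(\rho^2)$, which shifts $g(S\zeta,\zeta)$ by a non-zero multiple of $h|_P$.
\end{enumerate}
So the converse with constant $\beta$ cannot be derived from the stated hypothesis. It needs extra extrinsic input, for example the level-set setting of Lemma \ref{key}, where $\beta$ depends only on the value of $f$.
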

\noindent In addition, \cite{Hung0} observes that if each level set of the potential function of a GKRS is deformed contact, then the soliton structure is foliated by deformations of a corresponding Sasakian model.

\begin{theorem}{\rm\cite[Theorem D]{Hung0}}
	\label{HDcontact}
	Let $(M, g, J, f, \lambda)$ be a complete connected K\"{a}hler GRS. For each regular value $c$ of $f$, suppose that $(M_c, \zeta_{c}, \eta_{c}, \Phi_{c}, g_{c})$ is a deformed contact structure. Then the soliton is totally determined by a connected Sasakian model $(P, \eta, \zeta, \Phi, g_P)$ which is a Riemannian submersion, with circle fibers, over a K\"{a}hler-Einstein manifold $(N, g_N, J_N)$ with $\Rc_N=k g_N.$ That is, there is a submersion map $\pi: P\mapsto N$ such that
	\[g_P= \eta\otimes \eta +\pi^\ast g_N ~~\textrm{and}~~ d\eta=\pi ^\ast \omega_N.\]
	There is an interval $I$ with coordinate $s$ such that there is a diffeomorphism $$\phi: I\times P \mapsto M_o,$$ a dense subset of $M$,  $f\circ \phi= Bs+C$, and 
		\begin{equation} 
		\label{calabi}
		\phi^\ast g= \frac{ds^2}{\alpha(s)}+ \alpha(s) \eta\otimes\eta + (2s+A) \pi ^\ast g_N.\end{equation}
	Here $A, B, C$ are constant and $\alpha$ solves a first order equation, for $n=\dim_{\mathbb{C}}N$, namely  
	\[\lambda(2s+A)=k-\frac{d\alpha}{ds}-\frac{2n\alpha}{2s+A}+B\alpha. \]
There is a boundary point of $I$ such that $\alpha\rightarrow 0$. Furthermore, if $(2s+A)\rightarrow 0$ towards that end point, then $(P, \eta, \zeta, \Phi, g_P)$ is the standard Sasakian sphere and $(N, g_N, J_N)$ is, up to homothety, isomorphic to a standard complex projective space. 

\end{theorem}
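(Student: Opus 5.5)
The plan is to reconstruct the whole soliton from the geometry of a single regular level set. Two K\"ahler facts drive everything. First, on a K\"ahler GRS, $\Hess f$ is $J$-invariant, so $J\nabla f$ is a Killing, holomorphic vector field. Second, $\nabla J=0$. Writing $\zeta=-JV$ with $V=\nabla f/|\nabla f|$, we get $\nabla_X\zeta=-JSX$ for $X$ tangent to $\Sigma_c$. Restricting to $\Sigma_c$ then gives
\[
d\eta(X,Y)=-g(JSX,Y)+g(JSY,X).
\]
Since $S$ commutes with $J$ on the transverse subspace, this reduces to $d\eta=-2g(JS\,\cdot,\cdot)$ there. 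The deformed contact hypothesis says $d\eta$ is a fixed constant multiple of $g(\cdot,\Phi\,\cdot)$ on the transverse part. Hence $S$ restricted to $\ker\eta\cap T\Sigma_c$ is $a_c\,\mathrm{Id}$ for a constant $a_c$. Killingness of $|\nabla f|\zeta$ gives $g(S\zeta,\zeta)=\zeta(\cdot)$-type identities, from which $S\zeta$ is proportional to $\zeta$. So on each level set $S=\alpha\,\mathrm{Id}+\beta\,\zeta\otimes\eta$, which matches Theorem \ref{hyperdeformedSasa}.

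Next I will show that $|\nabla f|$ is constant on each level set. Choose $X\perp\zeta$ in $T\Sigma_c$. Then $X|\nabla f|^2=2\Hess f(\nabla f,X)=2|\nabla f|^2 g(SV,X)$-type terms. These vanish because $\Hess f(V,X)$ is related through $J$-invariance to $\Hess f(\zeta,JX)$, and that is $|\nabla f|\,g(S\zeta,JX)=0$. Together with $\zeta|\nabla f|=0$ (since $J\nabla f$ is Killing and preserves $f$), this gives constancy. Consequently the level sets are equidistant, and the normalized gradient flow identifies all regular level sets with one manifold $P$.

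Define $s$ by $f=Bs+C$ and set $\alpha(s)=|\nabla s|^2$, so the normal part of the metric is $ds^2/\alpha$. Because $J\nabla s$ is the Killing field $\alpha\,\zeta$-direction, the Reeb part of the metric is $\alpha\,\eta\otimes\eta$. On the transverse part, $\mathcal L_{\nabla s}g=2\Hess s$ is $a\cdot g^\perp$. Combined with $d\eta$ being proportional to the transverse K\"ahler form, which is fixed up to scale along the flow, this forces $g^\perp$ to evolve linearly in $s$, i.e. $g^\perp=(2s+A)\pi^\ast g_N$ after normalization. Quotienting $P$ by the Reeb flow (locally, then globally via the induced Riemannian submersion with circle fibres) gives the K\"ahler base $N$ with $d\eta=\pi^\ast\omega_N$. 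This is the Calabi-type ansatz \eqref{calabi}.

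It then remains to compute the soliton equation for this ansatz. The transverse Ricci of a Calabi-type metric equals $\Rc_N$ minus terms in $\alpha'$ and $\alpha/(2s+A)$. The soliton equation in transverse directions is therefore an identity with only $\Rc_N$ depending on $N$. This forces $\Rc_N=kg_N$ and yields the first order ODE for $\alpha$; the remaining components follow from it by the contracted Bianchi identity. Completeness makes $I$ have a finite endpoint where $\nabla f=0$, i.e. $\alpha\to0$. If $2s+A\to0$ there, the whole orbit collapses to a point, and smoothness of the metric at that point makes $P$ a round, hence standard Sasakian, sphere and $N=P/S^1$ homothetic to $\mathbb{CP}^{n}$.

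The main obstacle is the second step: showing $|\nabla f|$ and the contact constant are constant on each level set, so that a single model $P$ governs all level sets. I would first try to extract this from the $J$-invariance of $\Hess f$ together with the identity $\Rc(\nabla f)=\tfrac12\nabla R$.
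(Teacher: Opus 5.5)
The paper does not prove this statement; it quotes it from \cite{Hung0}. When the paper applies it in Section~\ref{S4}, the delicate step below never arises, because $f$ is invariant under a cohomogeneity one action and so $|\nabla f|$ is automatically constant on level sets. Your overall architecture is the natural one: level sets, then a Calabi-type ansatz, then the transverse soliton equation forcing $\Rc_N=kg_N$ and the ODE. Your computation $d\eta=-2g(JS\cdot,\cdot)$ on $\mathcal D=\ker\eta$, giving $a_c\,\mathrm{Id}$ for the $\mathcal D$-block of $S$, is correct.

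There is, however, a genuine gap at exactly the step you flag as the main obstacle: showing $S\zeta\parallel\zeta$ and that $|\nabla f|$ is constant on each level set. Your justification via Killingness of $|\nabla f|\zeta=-J\nabla f$ does not work. Killingness of $J\nabla f$ is \emph{equivalent} to $J$-invariance of $\Hess f$, and for $X\in\mathcal D$ it yields only
\[
g(S\zeta,X)=\frac{(JX)(|\nabla f|^2)}{2|\nabla f|^2}.
\]
This ties the off-diagonal part of $S$ to the tangential derivative of $|\nabla f|$ without determining either. By this identity, $S\zeta\parallel\zeta$ is equivalent to $|\nabla f|$ being constant along $\mathcal D$, so your second step, which deduces the latter from the former, adds nothing. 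Adding $\Rc(\nabla f)=\frac12\nabla R$ only trades $|\nabla f|^2$ for $R$ on a level set, since $R+|\nabla f|^2-2\lambda f$ is constant, and $R$ is no better known.

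The missing idea is that you used the deformed contact hypothesis only on $\mathcal D\times\mathcal D$. A $(\pm,H,F)$-deformation of a contact metric structure satisfies $d\eta=\pm\frac{H}{F^2}\,g(\cdot,\Phi\cdot)$ on all of $T\Sigma_c$, and $\Phi\zeta=0$, so $\iota_\zeta d\eta=0$. Your own formula, together with $J\zeta=V$ and $g(SX,V)=0$, gives $d\eta(\zeta,X)=g(S\zeta,JX)$ for $X\in\mathcal D$. Hence $S\zeta=\mu\zeta$, and the displayed identity then gives $Y(|\nabla f|)=0$ for all $Y\in\mathcal D$. Finally, $\zeta(|\nabla f|)=0$ because the flow of the Killing field $J\nabla f$ preserves $f$. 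The ``contact constant'' is constant on each level set by definition, since $H$ and $F$ are constants, so that part is not an obstacle. With this fix your local argument goes through: the $1$-form dual to $J\nabla s$ is invariant under the normalized gradient flow, and $g^\perp=h(s)g^\perp_0$ with $h'$ constant.

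Separately, your global claims are asserted rather than proved. A Sasakian Reeb flow need not have closed orbits, so ``globally via the induced Riemannian submersion with circle fibres'' is circular. Closedness of the Reeb orbits has to be derived, for example from the linearization of $J\nabla f$ at the critical set reached at an end where $\alpha\to0$. That end must itself be produced from the ODE rather than from ``completeness'' alone: $2s+A>0$ bounds $I$ from below, and you must rule out $\alpha$ staying positive or blowing up there.
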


\subsection{Projection and Lifting of Symmetry}
\label{liftofsymmetry}

\quad\quad The manifold $N$ 
appearing in Theorem \ref{HDcontact} actually corresponds to the space of trajectories generated by the Reeb vector field of the Sasakian model. In general, each such trajectory is called a {\em leaf}, and the leaf space provides a reduction scheme regarding the symmetry of the original structure. Precisely, let $(P, \eta, \zeta, \Phi, g)$ be a $K$-contact structure and $\mathcal{F}_\zeta$ denotes the Riemannian foliation generated by $\zeta$. We further assume that each leaf is compact. 

The foliation is said to be regular if around each point $x$, there is a foliated coordinate chart $(U, x)$ such that each leaf of $\mathcal{F}_\zeta$ passes through $U$ at most once. An implication of regularity in this context is that the action generated by $\zeta$ is free. The space of leaves is denoted by $P/{\mathcal{F}_\zeta}$ and there is a canonical projection $\pi: P\mapsto P/\mathcal{F}$, mapping each point to its leaf. It is known
that (see, for example,  Theorem 7.1.3 in \cite{CK}) if $(P, \eta, \zeta, \Phi, g)$  generates a regular foliation, then $P/\mathcal{F}_{\zeta}$ is an almost K\"{a}hler manifold. The induced structure tensors $G$ and $J$ satisfy the following relations
\begin{equation*}
	g =\pi^\ast G+ \eta\otimes \eta \,\, \textrm{and}\,\,
	\overline {JX} = \Phi \overline{X}, 
\end{equation*}
where $\overline{X}$ is the horizontal lift with respect to $\eta$. That is, the horizontal lift $\overline{X}$ is uniquely determined by the equations $\eta(\overline{X})=0$ and $\pi_\ast (\overline{X})=X.$ Furthermore, the canonical projection $\pi: (P, g) \mapsto (P/\mathcal{F}, G)$ is a Riemannian submersion and, letting $W$ denote the K\"{a}hler form on the latter, we have  
\[ d\eta = \pi^\ast W.\]

\begin{remark} 
	More generally, one can replace regularity by quasi-regularity and allow the leaf space to have orbifold singularities.  
\end{remark}

The canonical projection induces a map from the Lie algebra of infinitesimal automorphisms on $P$ to one on $N$ by Lemma 8.17 in \cite{CK}. Furthermore, since $\pi$ is a Riemannian submersion, the kernel of this map is precisely the span of $\zeta$. Thus, the automorphism group of $(P, g, \zeta, \eta, \Phi)$ is of dimension at most that of $(P/\mathcal{F}_\zeta, G, J)$ plus one as observed in \cite{Ta}. However, this map might not be onto. Indeed, the lifting of an infinitesimal automorphism from $P/\mathcal{F}_\zeta$ to $P$ 
has a topological obstruction as described below.  

Let $X$ be an infinitesimal automorphism of $(P/\mathcal{F}_\zeta, G, J)$. A lift of $X$ is a vector field on $P$ of the form $\overline{X}+ \overline{a} \zeta$ where $\overline{a}$ is a smooth function which is constant on each leaf; that is, $\overline{a}$ is a lift of a function $a: P/\mathcal{F}_\zeta \mapsto \mathbb{R}$. By direct 
calculation, one sees that $\overline{X}+ \overline{a} \zeta$ is an infinitesimal automorphism of $(P, g, \Phi, \eta, \zeta)$ if and only if $\mathcal{L}_{\overline{X}+ a \zeta}\eta =0$ or, equivalently, if there is a smooth function $a: P/\mathcal{F}_\zeta \mapsto \mathbb{R}$ such that $da (\cdot)= W(X, \cdot)$ by Corollary 8.1.9 in \cite{CK}. 
In other words, the cohomology class of the $1$-form $W(X, \cdot)$ is vanishing. We summarize this discussion in the following result.
\begin{lemma}\label{liftsym}
	Let $(P, g, \Phi, \eta, \zeta)$ be a regular $K$-contact structure with compact leaves and $(P/{\mathcal{F}_\zeta}, G, J)$ be the leaf space with induced structure tensors. The canonical projection $\pi: P\mapsto P/{\mathcal{F}_\zeta}$ induces a linear map between the corresponding Lie algebras of infinitesimal automorphisms. The kernel of this map is the span of $\zeta$. An infinitesimal automorphism $X$ of $(P/{\mathcal{F}_\zeta}, G, J)$ is in the image of this map if and only if the cohomology class of the $1$-form $W(X, \cdot)$ vanishes.     
\end{lemma}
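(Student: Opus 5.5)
The statement to prove is Lemma \ref{liftsym}. Most of it has been assembled in the discussion just before it, so the plan is to organise that discussion into three claims: the map is well defined, its kernel is the span of $\zeta$, and its image is characterised by a cohomological condition.

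\emph{The map is well defined.} Let $Y$ be an infinitesimal automorphism of $(P,g,\Phi,\eta,\zeta)$. Then $[Y,\zeta]=\mathcal{L}_Y\zeta=0$, so the flow of $Y$ commutes with the flow of $\zeta$ and permutes the leaves of $\mathcal{F}_\zeta$. Hence $Y$ is projectable, and $\pi_\ast Y$ is a well-defined vector field on $P/\mathcal{F}_\zeta$. Because $g=\pi^\ast G+\eta\otimes\eta$, and $Y$ preserves both $g$ and $\eta$ (using $\eta=g(\cdot,\zeta)$), it also preserves $\pi^\ast G$. Therefore $\pi_\ast Y$ is Killing for $G$. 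Because $Y$ preserves $\Phi$ and $\overline{JX}=\Phi\overline X$, $\pi_\ast Y$ preserves $J$. Linearity is clear, and $\pi_\ast[Y_1,Y_2]=[\pi_\ast Y_1,\pi_\ast Y_2]$ for projectable fields, so the map is a Lie algebra homomorphism; this is the content of Lemma 8.17 of \cite{CK}.

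\emph{The kernel is the span of $\zeta$.} Since $\zeta$ is Killing and $\mathcal{L}_\zeta\eta=0$ on a K-contact manifold, $\zeta$ is itself an automorphism, and it lies in the kernel. Conversely, if $\pi_\ast Y=0$, then $Y=h\zeta$ for some function $h$. From $0=\mathcal{L}_{h\zeta}\eta=dh+h\,\iota_\zeta d\eta=dh$ we get that $h$ is locally constant. Since $P$ is connected, $h$ is constant.

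\emph{The image.} Take $X$ on the leaf space. Every $Y$ with $\pi_\ast Y=X$ has the form $\overline X+b\zeta$. The condition $[Y,\zeta]=0$ is equivalent to $\zeta(b)=0$, because $[\overline X,\zeta]=0$ for basic horizontal lifts. So $b=\overline a$ for some function $a$ on the leaf space. The remaining conditions are the direct calculation referred to above (Corollary 8.1.9 of \cite{CK}):
\begin{itemize}
\item $\mathcal{L}_{\overline X+\overline a\zeta}\eta = d\overline a+\iota_{\overline X}d\eta=\pi^\ast\bigl(da+W(X,\cdot)\bigr)$.
\item Given $\mathcal{L}\eta=0$, the fact that $X$ is an automorphism of $(G,J)$ lets us lift invariance of $G$ and $J$ to invariance of $g$ and $\Phi$.
\end{itemize}
Consequently $X$ has a lift if and only if $W(X,\cdot)=-da$ is exact, up to the sign convention.

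It remains to see that $W(X,\cdot)$ is closed, so that its class is defined. By Cartan's formula, $d(\iota_XW)=\mathcal{L}_XW$, since $dW=0$ (the leaf space is almost K\"ahler, as $d\eta=\pi^\ast W$). Moreover $\mathcal{L}_XW=0$ because $X$ preserves $G$ and $J$. The statement is then that the de Rham class of $W(X,\cdot)$ vanishes.

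The main obstacle is the careful verification that $\mathcal{L}_Y\eta=0$ for the lifted field, together with automorphism of the base, forces $\mathcal{L}_Y\Phi=0$. I would check this by evaluating $\Phi$ separately on horizontal lifts and on $\zeta$. The compactness and regularity of the leaves is used to make $P/\mathcal{F}_\zeta$ a manifold, so that the pullback of basic forms is injective.
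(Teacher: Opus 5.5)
Your proposal is correct and follows the same route as the paper. An automorphism $Y$ satisfies $[Y,\zeta]=0$ and is therefore projectable, the kernel of the projection is spanned by $\zeta$, every lift has the form $\overline{X}+\overline{a}\zeta$, and the automorphism condition reduces to $\mathcal{L}_{\overline{X}+\overline{a}\zeta}\eta=\pi^\ast\bigl(da+W(X,\cdot)\bigr)=0$, i.e.\ exactness of $W(X,\cdot)$ up to sign. You also carry out explicitly the steps the paper delegates to \cite{CK} or dispatches in a phrase: the kernel computation $\mathcal{L}_{h\zeta}\eta=dh$ (which tacitly uses connectedness of $P$), the closedness of $W(X,\cdot)$, and the verification of $\mathcal{L}_Y\Phi=0$ on horizontal lifts and on $\zeta$. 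These checks go through as you describe.
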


\section{Cohomogeneity One Preserves the Potential Function}	\label{coho_one}
	
\quad\quad In this section, we show that, for a non-trivial GRS with $\I(M)$ acting by cohomogeneity one, the potential function $f$ is preserved by the action. When $\I(M)$ is compact, this is immediate by an averaging argument, see p. 263 of  Dancer and Wang \cite{DaW}. In the case $\lambda \neq 0$, the result follows from Corollary 2.2 of \cite{Pete2}, which we restate here for the sake of completeness. 

\begin{proposition}{\rm \cite{Pete2}}\label{p2.1}
	If $X$ is a Killing field on a GRS $\left(M, g, f, \lambda\right)$, with 
	$\lambda \neq 0$ then either $D_X f=0$ or the universal cover of $M$ splits isometrically as $\widetilde{M}=N \times \mathbb{R}$, where $N$ is a GRS with the same radial curvatures as $M$.
\end{proposition}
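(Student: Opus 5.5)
The plan is to show that the function $h:=D_Xf$ is affine, that is, $\Hess h\equiv 0$, and then split into cases according to whether $\nabla h$ vanishes. If $\nabla h\neq 0$, it is a nontrivial parallel field and the de Rham decomposition gives the splitting. If $\nabla h=0$, the assumption $\lambda\neq 0$ forces $h\equiv 0$.

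\textbf{Step 1 ($h$ is affine).} Since $X$ is Killing, its flow consists of local isometries. These preserve the Levi-Civita connection and every tensor built naturally from $g$. Hence $\mathcal{L}_X\Rc=0$ and $\mathcal{L}_X g=0$. Moreover, Lie differentiation along $X$ commutes with $\nabla$, so
\[\mathcal{L}_X \Hess f=\Hess(\mathcal{L}_X f)=\Hess h.\]
Applying $\mathcal{L}_X$ to the soliton equation \eqref{grs} therefore yields $\Hess h=\mathcal{L}_X(\lambda g)=0$. Consequently $\nabla h$ is a parallel vector field and $|\nabla h|$ is constant.

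\textbf{Step 2 (the non-constant case).} Suppose $\nabla h\neq 0$. Then $\nabla h$ is a nowhere vanishing parallel field. Passing to the universal cover, the de Rham decomposition splits off a line, so $\widetilde{M}=N\times\mathbb{R}$ isometrically, with the $\mathbb{R}$-factor tangent to $\nabla h$. By the Petersen--Wylie decomposition of solitons on reducible metrics (recalled in the introduction), the soliton structure decomposes accordingly. Thus $N$ is itself a GRS with the same $\lambda$. Its radial curvatures agree with those of $M$, because the flat line factor contributes nothing to curvature.

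\textbf{Step 3 (the constant case, which uses $\lambda\neq0$).} Suppose $h\equiv c$ is constant; we must show $c=0$. We use the standard identity
\[\scal+|\nabla f|^2-2\lambda f=\text{const},\]
which follows from the contracted Bianchi identity applied to \eqref{grs}. Apply $D_X$ to it:
\begin{itemize}
\item $D_X\scal=0$, since $X$ is Killing.
\item $D_X|\nabla f|^2=2g(\nabla f,\nabla h)$, since $\mathcal{L}_Xg=0$ and $\mathcal{L}_X df=dh$.
\end{itemize}
This gives $2g(\nabla f,\nabla h)-2\lambda h=0$. With $\nabla h=0$ and $h=c$, we get $\lambda c=0$, hence $c=0$, i.e.\ $D_Xf\equiv0$.

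The main obstacle is the commutation $\mathcal{L}_X\Hess f=\Hess(\mathcal{L}_Xf)$ in Step 1. It holds because a Killing field is an affine vector field, i.e.\ $\mathcal{L}_X\nabla=0$. The remaining points are routine: the identity used in Step 3, and the transfer of the soliton structure to $N$ in Step 2, which is quoted from \cite{Pete2}.
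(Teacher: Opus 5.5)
Your proof is correct. The paper does not prove this proposition itself; it quotes it from Petersen--Wylie \cite{Pete2}. Your argument is essentially the standard proof of that result: Lie-differentiate the soliton equation along $X$ to get $\Hess(D_Xf)=0$, split off a line via de Rham when $\nabla D_Xf\neq 0$, and otherwise differentiate $\scal+|\nabla f|^2-2\lambda f=\mathrm{const}$ along $X$ to obtain $\lambda D_Xf=0$. The only external inputs are the Petersen--Wylie decomposition of the soliton structure on $N\times\mathbb{R}$, which you cite, and completeness of $M$ for de Rham, which the paper's definition of a GRS supplies.
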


Therefore, it suffices to prove Theorem \ref{t1.1} in the case $\lambda=0$, which reduces to a proof of the following result.
\begin{theorem}\label{t3.3}
	
	Let $\left(M, g, f\right)$ be a non-trivial steady GRS of real dimension $n \geq 3$. Assume that the action of $\I(M)$ is by cohomogeneity one. Then the potential function $f$ is invariant under the action.
\end{theorem}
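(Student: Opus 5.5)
Our plan is to analyze how a Killing field $X$ of $(M,g)$ (coming from the Lie algebra of $G=\I(M)_0$) interacts with $f$ in the steady case. The key identity is the one underlying Proposition \ref{p2.1}. Since $\mathcal{L}_X g=0$, $X$ preserves $\Rc$. Applying $\mathcal{L}_X$ to \eqref{grs} with $\lambda=0$ gives $\Hess(Xf)=\mathcal{L}_X\Hess f=0$. Hence $u:=D_Xf$ has parallel gradient. If $\nabla u\neq 0$, then $\nabla u$ is a nonzero parallel vector field, so $\widetilde M$ splits off a line. This contradicts local irreducibility of a non-trivial soliton, at least when $M$ is not $\R$ itself, which $m\ge 3$ rules out. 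So $D_Xf\equiv c_X$ is a constant for every Killing field $X$. The map $X\mapsto c_X$ is linear on $\mathfrak{g}$, and since $[X,Y]f=X(c_Y)-Y(c_X)=0$, it is a Lie algebra homomorphism $\mathfrak g\to\R$ vanishing on $[\mathfrak g,\mathfrak g]$. The whole task is therefore to show $c_X=0$ for all $X$, i.e.\ that $f$ is $G$-invariant (for the full $\I(M)$, the finitely many components then follow by similar reasoning, or the statement is read for $G$).

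Suppose some $c_X\neq 0$. We would use the standard steady identity $R+|\nabla f|^2=C$ (Hamilton), which gives $|\nabla f|^2\le C$ since $R\ge 0$ on complete steady solitons (Chen). Now $|c_X|=|g(X,\nabla f)|\le |X|\,|\nabla f|$, so $|X|\ge |c_X|/\sqrt C>0$ everywhere. In particular $X$ has no zeros, and the flow of $X$ moves $f$ at constant speed: $f(\phi_t(p))=f(p)+c_Xt$.

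Next we would bring in cohomogeneity one. If the orbit space $M/G$ is a closed interval or a circle, or if there is a singular orbit, we argue as follows. On a singular orbit $G/K$ with $K$ compact, the isotropy group fixes the point $p$. Its Lie algebra consists of Killing fields $Y$ vanishing at $p$, so $c_Y=Y(f)(p)=0$. We would then use that every Killing field is tangent to orbits and that $c$ restricted to the orbit through any point is the differential of the function $f|_{G\cdot p}$ along the orbit directions. If some orbit is compact, $f$ attains a maximum on it, where all $Xf$ vanish, so $c\equiv0$. The remaining case is when all orbits are non-compact and principal, so $M\cong \R\times G/H$ or $S^1\times G/H$. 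Here we would exploit that $\nabla f$ has a component tangent to the orbits, namely the $g$-dual of $c$ restricted to orbit directions, which is $G$-equivariantly determined. Combining this with the fact that $\Hess f=-\Rc$ is $G$-invariant, we would show that the orbit-tangential part $T$ of $\nabla f$ is a $G$-invariant vector field whose covariant derivative is controlled. The aim is to show that $\nabla T$ forces a parallel direction (again contradicting irreducibility) or forces $\Rc$ to vanish in that direction. That would be incompatible with non-triviality via $R+|\nabla f|^2=C$ and the strong maximum principle for $R$ (which gives $R>0$ unless flat).

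The main obstacle is this last case: non-compact orbits with a character $c:\mathfrak g\to\R$ that does not vanish. Our first attempt would be to use the orbit geometry to find a Killing field $X$ with $c_X\ne0$ that is (after modification by the isotropy algebra) $G$-invariant, hence of constant length along orbits. We would then compute $\Hess f(X,X)=-\Rc(X,X)$ using $X f$ constant, which gives $\Hess f(X,X)=-g(\nabla_X X,\nabla f)$. Since $\nabla_XX=-\tfrac12\nabla|X|^2$, the relation $\Rc(X,X)=-\tfrac12 g(\nabla|X|^2,\nabla f)$ together with Bochner's formula $\Delta\tfrac12|X|^2=|\nabla X|^2-\Rc(X,X)$ yields a drift-Laplacian inequality $\Delta_f|X|^2\ge0$. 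Integrating this along the one-dimensional orbit space, where $|X|^2$ is a function of a single variable bounded below by a positive constant, should force $\nabla X=0$. That contradicts irreducibility.
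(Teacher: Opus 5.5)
\textbf{Verdict: the proposal has genuine gaps.} Your opening is sound. You get $\Hess(Xf)=\mathcal{L}_X\Hess f=0$, and irreducibility then gives $Xf\equiv c_X$. The map $c$ is a character of $\mathfrak{g}$ vanishing on every isotropy algebra. A compact orbit forces $c\equiv0$, because $f$ attains a maximum on it, where $Xf=c_X$ must vanish.

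Beyond that the proposal is a plan, and its load-bearing steps fail as stated. A flat example shows this. On $\mathbb{R}^3$ with $f=z$ (a steady soliton), $G=\mathbb{R}\times SO(2)$ acts by cohomogeneity one, with the $z$-axis as a non-compact singular orbit, and $c(\partial_z)=1\neq0$. Put $Y=x\partial_y-y\partial_x$ and $X=\partial_z+Y$. Then $X$ is central, $|X|^2=1+x^2+y^2\ge1$ depends only on the orbit parameter, and $\Delta_f|X|^2=2|\nabla X|^2=4$, where $\Delta_f=\Delta-\nabla_{\nabla f}$.

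\begin{itemize}
\item[(i)] A singular orbit only yields $c|_{\mathfrak{k}}=0$, not $c=0$. Your reduction to ``all orbits non-compact and principal'' is therefore unjustified.
\item[(ii)] Your last step claims that $\Delta_f|X|^2\ge0$, with $|X|^2$ a one-variable function bounded below by a positive constant, forces $\nabla X=0$. This is false. On a non-compact orbit space a bounded-below $f$-subharmonic function can simply be monotone, and without decay or volume control there is no integration by parts.
\end{itemize}

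None of these steps uses irreducibility, or that $G$ is the full isometry group, before your final sentence. They would therefore have to work in this example, and they do not.

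\begin{itemize}
\item[(iii)] The existence of a $G$-invariant, i.e.\ central, Killing field $X$ with $c_X\neq0$ is not justified. Modifying $X$ by the isotropy algebra does not change $c_X$, so you need $\mathfrak{z}(\mathfrak{g})\not\subset\ker c$, and nothing guarantees this. For the non-abelian algebra $[A,B]=B$, a character can be nonzero on $A$ while the center is trivial.
\end{itemize}

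\textbf{How the paper argues instead.} The paper avoids global analysis altogether.
\begin{itemize}
\item It normalizes a basis so that $g(Z_1,\nabla f)=a\neq0$ and $g(Z_i,\nabla f)=0$ for $i\ge2$, and writes $\nabla f=u\,\partial_t+u_1Z_1$.
\item It uses $\scal+|\nabla f|^2=\mathrm{const}$ to make $u$, $u_1$ and $|Z_1|$ constant on orbits.
\item It uses $\Rc(\partial_t,X)=0$ in the mixed component of the soliton equation to get $\partial_t u_1=0$.
\item It notes that $\nabla(f-\int u\,dt)=u_1Z_1$ is both a gradient and Killing, hence parallel, so irreducibility gives $a=0$.
\end{itemize}
That local mechanism (gradient $+$ Killing $\Rightarrow$ parallel) is what should replace your Liouville step.

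Be aware, though, of what this mechanism needs: that the orbit-tangential part of $\nabla f$ is a multiple of a single Killing field $Z_1$. Via $[Z_i,\nabla f]=0$ with $u_1$ orbit-constant, this forces $[Z_i,Z_1]=0$, so it already contains your unproved step (iii). Moreover, orthogonality of $\nabla f$ to $Z_2,\dots,Z_k$ only places the tangential part on the line orthogonal to their span. In the flat example with $Z_1=\partial_z+Y$, $Z_2=Y$, that line is spanned by $\partial_z$, not by $Z_1$. That proportionality is where a real argument is needed in either approach.
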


Before we prove Theorem \ref{t3.3}, we establish a number of important lemmas.
We begin with a basic observation relating a Killing vector field and the potential function $f$, whose proof follows directly from Proposition \ref{p2.1}  and Lemma 2.6 in \cite{DT}.

\begin{lemma}\label{l3.1}
	Let $\left(M, g, f\right)$ be a non-trivial GRS of dimension $n \geq 3$. If $X$ is a Killing vector field then $g(X,\nabla f)$ is constant and $[X, \nabla f]=0$. 
\end{lemma}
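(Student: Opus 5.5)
We need to prove Lemma \ref{l3.1}: for a Killing field $X$ on a non-trivial GRS of dimension $n\ge 3$, $g(X,\nabla f)$ is constant and $[X,\nabla f]=0$. The plan has two steps: first obtain $[X,\nabla f]=0$ from the invariance of $\Rc$ under isometries, then derive constancy of $g(X,\nabla f)$ from it.

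\emph{Step 1: $[X,\nabla f]=0$.} Since $X$ is Killing, its local flow $\phi_t$ consists of isometries. These preserve $g$ and $\Rc$, hence also $\Hess f=\lambda g-\Rc$. Differentiating, $\mathcal{L}_X \Hess f=0$. Since $\mathcal{L}_X$ commutes with $d$ and with $\nabla$ for a Killing field, this gives $\Hess(Xf)=\mathcal{L}_X\Hess f=0$. So $h:=Xf=g(X,\nabla f)$ has parallel gradient. If $\nabla h\neq 0$ somewhere, it is a nonzero parallel vector field on a connected manifold. Then the universal cover splits off a line and the metric is locally reducible, contradicting non-triviality. Hence $\nabla h\equiv 0$ and $h$ is constant. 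For the bracket, $\mathcal{L}_X g=0$ together with $\mathcal{L}_X df=d(Xf)=dh=0$ gives $\mathcal{L}_X(\nabla f)=0$, i.e.\ $[X,\nabla f]=0$.

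\emph{Step 2: constancy via the cited results, and the steady case.} In fact Step 1 already yields constancy of $g(X,\nabla f)$ directly, so one could simply invoke Lemma 2.6 of \cite{DT} for the statement that $\Hess(g(X,\nabla f))=0$. For $\lambda\ne0$ there is an alternative: Proposition \ref{p2.1} gives $Xf=0$ outright, since the splitting alternative contradicts local irreducibility. For $\lambda=0$ the Hessian argument is needed. The only subtle point is the commutation $\mathcal{L}_X\nabla=\nabla\mathcal{L}_X$ on tensors. This holds because the flow of $X$ acts by affine (indeed isometric) maps, so the Levi-Civita connection is preserved.

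\emph{Main obstacle.} The main care is needed in justifying that a nonzero parallel gradient contradicts non-triviality. A parallel vector field on $M$ lifts to $\widetilde M$, and de Rham decomposition then gives $\widetilde M=N\times\mathbb{R}$, so $M$ is not locally irreducible. This is exactly excluded by our definition of non-trivial. The hypothesis $n\ge3$ is only inherited from the cited statements and plays no real role.
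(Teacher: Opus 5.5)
Your proof is correct. The paper itself gives no argument: it only says the lemma follows from Proposition~\ref{p2.1} and Lemma~2.6 of \cite{DT}. You instead carry out the underlying computation directly.
\begin{itemize}
\item First, $\mathcal{L}_X\Hess f=\Hess(Xf)$, because Killing flows preserve the Levi-Civita connection.
\item Second, local irreducibility forces the parallel field $\nabla(Xf)$ to vanish.
\item Third, $\mathcal{L}_Xg=0$ together with $\mathcal{L}_X df=d(Xf)=0$ gives $[X,\nabla f]=0$.
\end{itemize}

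What your route gains is uniformity in $\lambda$. Proposition~\ref{p2.1} by itself only covers $\lambda\neq 0$, whereas your Hessian argument treats the steady case on the same footing. The steady case is exactly where the lemma is needed, namely in the proof of Theorem~\ref{t3.3}. In that case only constancy of $g(X,\nabla f)$ comes out, not vanishing; the vanishing is what Theorem~\ref{t3.3} then proves under the cohomogeneity-one hypothesis. Your Step~2 is therefore unnecessary.
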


If $\left(M, g, f\right)$ is, additionally, of cohomogeneity one and not homogeneous then, a neighborhood around a principal orbit is diffeomorphic to $I\times P$, for an interval $I$ with coordinate $t$, and a differentiable manifold $P$ such that the the metric can be written as  $g=dt^2+ g_t$ by Proposition 1.4 in Podesta \cite{Po}. Here $g_t$ is a family of $\I(M)$-invariant metrics on $P$.
 
\begin{lemma}\label{basis2}
	Let $\left(M, g, f\right)$ be a non-trivial GRS, and let $\I (M)$ act on $M$ by cohomogeneity one. 
	Assume that $\dim (\mathfrak{I}(M))=k$ for $1\leq k<\infty,$ where $\mathfrak{I}(M)$ is the Lie algebra of $\I (M).$ Suppose there exists a basis $\left\{Z_1, Z_2, \ldots, Z_k\right\}$ for $\mathfrak{I}(M)$ such that 
	\begin{align*}
		g(\nabla f, Z_1)=a\neq 0 ~~\text{and   }	g\left(Z_i, \nabla f\right)=0 \,\,\text{for all}\,\, 2\leq i\leq k.
	\end{align*} 
	Then, on a neighborhood around a principal orbit, 
		$$Z_1	\perp \text{span}\{Z_2, \ldots, Z_k\} \,\,\,\text{and}\,\,\, \nabla f=u \partial_t+u_1Z_1$$
for some smooth functions $u, u_1$.
\end{lemma}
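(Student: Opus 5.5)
The plan is to work on the tubular neighborhood $I\times P$ of a principal orbit, where $g=dt^2+g_t$ and the orbits are the slices $\{t\}\times P$. I will split $\nabla f=u\,\partial_t+W$, with $u=g(\nabla f,\partial_t)$ and $W$ tangent to the orbit. Since $\I(M)$ acts transitively on each principal orbit, at every point $p$ the values $Z_1(p),\dots,Z_k(p)$ span $T_p(G\cdot p)$. Every Killing field induced by the action is tangent to the orbits, so $g(Z_i,\partial_t)=0$. Lemma \ref{l3.1} then gives
\[
g(Z_1,W)=a\neq 0 \quad\text{and}\quad g(Z_i,W)=0 \quad (2\le i\le k).
\]

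Next I will locate $W$. Set $T'(p)=\mathrm{span}\{Z_2(p),\dots,Z_k(p)\}$. The vector $W(p)$ lies in the orbit's tangent space and is orthogonal to $T'(p)$, while $a\neq0$ forces $W(p)\notin T'(p)$. Since adding $Z_1(p)$ to $T'(p)$ yields the whole tangent space, $T'(p)$ has codimension exactly one in the orbit's tangent space. Hence $W(p)$ spans the orthogonal complement of $T'(p)$ inside the orbit's tangent space. In particular $Z_1=cW+Y$ with $Y\in T'$ and $c=a/|W|^2$.

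Both claims of the lemma therefore reduce to showing $Y\equiv 0$. Indeed, if $Y\equiv0$ then $Z_1$ is orthogonal to $T'$, which is the orthogonality statement, and $W=u_1Z_1$ with $u_1=|W|^2/a$ is smooth because $|W|^2=a^2/|Z_1|^2$.

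To prove $Y\equiv0$ I will exploit $[Z_i,\nabla f]=0$ from Lemma \ref{l3.1}. This gives $\nabla_{Z_i}\nabla f=\nabla_{\nabla f}Z_i$, hence
\[
\Hess f(Z_i,Z_j)=g(\nabla_{\nabla f}Z_i,Z_j).
\]
The right-hand side is skew in $i,j$ by the Killing equation, while the left-hand side is symmetric. So $\Hess f$ vanishes on the orbit directions. Consequently $\nabla f\, g(Z_i,Z_j)=0$, and by the soliton equation $\Rc=\lambda g$ on orbit directions. A further consequence I will use is that the functional $X\mapsto g(X,\nabla f)$ on the Lie algebra kills brackets, because $g([X,Y],\nabla f)=X\,g(Y,\nabla f)-g(Y,[X,\nabla f])=0$. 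Therefore $\mathfrak h=\mathrm{span}\{Z_2,\dots,Z_k\}$ is an ideal containing the derived algebra.

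The main obstacle is turning this into $Y=0$ pointwise. My first attempt will be to write $Y=\sum_{i\ge2}c_iZ_i$ and test the Killing identity $g(\nabla_{Z_j}Z_1,Z_l)+g(\nabla_{Z_l}Z_1,Z_j)=0$ against the relations above. The key inputs are that $\mathfrak h$ is an ideal, so $[Z_1,Z_i]\in\mathfrak h$, and that $W$ is $G$-invariant, since $\nabla f$ commutes with all Killing fields. The aim is to show that the $T'$-component of $Z_1$ is itself a $G$-invariant field inside $T'$ that pairs trivially with $\nabla f$, and hence must vanish.

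If this fails, the fallback is to replace $Z_1$ by $Z_1-\sum c_iZ_i$. This is legitimate only if the $c_i$ turn out to be constant along the orbit, which I would try to deduce from the $G$-invariance of $W$ and of $g_t$. That change of basis is harmless for all later uses of the lemma.
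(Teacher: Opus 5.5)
Your reduction is correct: with $\nabla f=u\,\partial_t+W$, the vector $W$ spans the orthogonal complement of $T'=\mathrm{span}\{Z_2,\dots,Z_k\}$ inside the orbit's tangent space. So both conclusions are equivalent to $Y\equiv 0$, i.e.\ $Z_1\perp T'$ pointwise. The paper's proof takes exactly this step for granted. It infers from $\nabla f\perp Z_2,\dots,Z_k$ that ``the tangential component of $\nabla f$ must lie in the direction of $Z_1$'', and then reads off $g(Z_1,Z_j)=0$ from $0=u_1\,g(Z_1,Z_j)$. But $W\perp T'$ gives $W\parallel Z_1$ only if $Z_1\perp T'$ is already known, so that inference is circular and gives you nothing to borrow. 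The genuine gap in your proposal is that you do not prove $Y\equiv 0$ either: you end with an intended strategy and a fallback, and neither works as written.

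First, the claim that $\Hess f$ vanishes on orbit directions is false. The Killing equation makes $g(\nabla_U Z_i,V)$ skew in $(U,V)$, not in $(i,j)$. Symmetrizing $g(\nabla_{\nabla f}Z_i,Z_j)$ in $i,j$ gives $\nabla f\bigl(g(Z_i,Z_j)\bigr)$, the very quantity you then conclude vanishes. Your derivation uses only $[Z_i,\nabla f]=0$ and the Killing property. Both hold for the Bryant soliton $dr^2+\phi(r)^2g_{S^{m-1}}$ with radial $f$ and rotational $Z_i$, yet there $\Hess f=f'\phi\phi'\,g_{S^{m-1}}\neq 0$ on the orbits, so also $\Rc\neq\lambda g$ there.

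Second, the valid identities you collect cannot force $Y=0$. These are: the $Z_i$ are Killing, $g(Z_i,\nabla f)$ is constant, $[Z_i,\nabla f]=0$, $\mathfrak h$ is an ideal, and $W$ is invariant. Take $\mathbb{R}\times\mathbb{R}^2$ with $g=dt^2+dx^2+2c(t)\,dx\,dy+dy^2$, where $|c|<1$ is non-constant, and $f=a_1x+a_2y$ with $a_1\neq 0$. The fields $Z_1=\partial_x$ and $Z_2=\partial_y-\frac{a_2}{a_1}\partial_x$ satisfy all of these identities, and even $\Hess f=0$ on the slices. Yet $g(Z_1,Z_2)=c(t)-a_2/a_1\not\equiv 0$. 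In particular, an invariant field in $T'$ that pairs trivially with $\nabla f$ need not vanish.

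Third, the fallback does not prove the lemma as stated, which concerns the given $Z_1$. Moreover, constancy of the $c_i$ along each orbit is not enough: $Z_1-\sum c_iZ_i$ must belong to $\mathfrak I(M)$, so the $c_i$ must also be independent of $t$, which fails in the example above. Any proof of $Y\equiv 0$ has to use the soliton equation in an essential way, and your sketch contains no such step.
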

\begin{proof}%[Proof of Lemma \ref{basis2}]
On $M^{\reg}$, the tangent bundle splits orthogonally as
$$T M=\operatorname{span}\left\{\partial_t\right\} \oplus T P.$$
Since the Killing vector fields $Z_i$ are tangent to the orbits, we have
$
T P=\operatorname{span}\left\{Z_1, \ldots, Z_k\right\}.
$
The assumption $g\left(\nabla f, Z_i\right)=0$ for all $i \geq 2$ implies that
$$
\nabla f \perp \operatorname{span}\left\{Z_2, \ldots, Z_k\right\} .
$$
Since $\partial_t$ is orthogonal to $T P$, the tangential component of $\nabla f$ must lie in the direction of $Z_1$. Hence there exist functions $u, u_1$ such that
$$
\nabla f=u \partial_t+u_1 Z_1.
$$
Taking the inner product with $Z_1$, we obtain
$$
a=g\left(\nabla f, Z_1\right)=u_1\left|Z_1\right|^2,
$$
and therefore $u_1=\frac{a}{\left|Z_1\right|^2} \neq 0$. Since $g\left(\partial_t, Z_j\right)=0$ for any $j \geq 2$, we get 
$$
0=g\left(\nabla f, Z_j\right)=g\left(u \partial_t+u_1 Z_1, Z_j\right)=u_1 g\left(Z_1, Z_j\right).
$$
Because $u_1 \neq 0$, it follows that $g\left(Z_1, Z_j\right)=0$ for all $2 \leq i \leq k$.
\end{proof}
\begin{lemma}\label{basis3}  Under the same assumptions as in Lemma \ref{basis2}, if $\lambda=0,$ then
	$\left.u\right|_{\left(P, g_t\right)},$ $\left.u_1\right|_{\left(P, g_t\right)}$ and $\left.|Z_1\right|_{\left(P, g_t\right)}$ are constants for each $t.$
\end{lemma}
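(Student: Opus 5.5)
The plan is to show that each of the three quantities is built from data that the identity component $G^0$ of $\I(M)$ preserves, and then to use transitivity of $G^0$ on (each component of) a principal orbit $(P,g_t)$. I would get $u$ from the invariance of $\nabla f$ and $\partial_t$, and $|Z_1|$ from Hamilton's identity. The hypothesis $\lambda=0$ enters only through that identity.

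\emph{Step 1: $u$ is constant on orbits.} The field $\partial_t$ is the unit normal field to the orbits $\{t\}\times P$. Every $\phi\in G^0$ maps each orbit onto itself, since $G^0$ is connected and orbits are parametrized by $t$. Hence $d\phi$ maps unit normals to unit normals, and by continuity of $\phi\mapsto d\phi(\partial_t)$ from the identity we get $d\phi(\partial_t)=\partial_t$. Equivalently, $[Z_i,\partial_t]=0$ for every Killing field $Z_i$. By Lemma \ref{l3.1}, $[Z_i,\nabla f]=0$ for all $i$. Therefore
\[
Z_i\big(u\big)=Z_i\, g(\nabla f,\partial_t)=g([Z_i,\nabla f],\partial_t)+g(\nabla f,[Z_i,\partial_t])=0,
\]
where the first equality uses $\mathcal{L}_{Z_i}g=0$. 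Since the $Z_i$ span $TP$, the function $u$ is locally constant on $P$, hence constant on each connected principal orbit.

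\emph{Step 2: $|\nabla f|^2$ is constant on orbits.} For a steady soliton, Hamilton's identity gives $R+|\nabla f|^2=\mathrm{const}$. The scalar curvature $R$ is invariant under isometries, so it is constant on each orbit, and hence so is $|\nabla f|^2$. This is exactly where $\lambda=0$ is needed: for $\lambda\neq 0$ the identity reads $R+|\nabla f|^2-2\lambda f=\mathrm{const}$, and invariance of $f$ is not yet known.

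\emph{Step 3: $|Z_1|$ and $u_1$ are constant.} By Lemma \ref{basis2}, $\partial_t\perp Z_1$, $\nabla f=u\partial_t+u_1Z_1$, and $u_1=a/|Z_1|^2$. Then
\[
|\nabla f|^2=u^2+u_1^2|Z_1|^2=u^2+\frac{a^2}{|Z_1|^2}.
\]
By Steps 1 and 2 and since $a\neq0$, $|Z_1|^2$ is constant on each $(P,g_t)$. Consequently $u_1=a/|Z_1|^2$ is constant there as well.

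The main obstacle is Step 1, specifically justifying that $G^0$ fixes $\partial_t$ and that constancy on $G^0$-orbits gives constancy on all of $P$. For the first point I would appeal to the local product structure $I\times P$ from \cite{Po} and the fact that each orbit is a level set of $t$. For the second, if $P$ has several components, I would argue componentwise, since a principal orbit of $\I(M)$ has finitely many components, each of which is a $G^0$-orbit.
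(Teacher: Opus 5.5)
Your proof is correct and follows the paper's argument: $[Z_i,\partial_t]=0$ combined with $[Z_i,\nabla f]=0$ from Lemma \ref{l3.1} gives $Z_i u=0$, and then the steady identity $\scal+|\nabla f|^2=\mathrm{const}$ together with $u_1=a/|Z_1|^2$ yields the constancy of $|Z_1|$ and $u_1$. The only cosmetic difference is how $[Z_i,\partial_t]=0$ is obtained: the paper verifies it by a direct computation using the Killing equation and the symmetry of the second fundamental form, whereas you derive it from the flow of $Z_i$ preserving each orbit and its unit normal.
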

\begin{proof}
	We compute, for a tangential vector $X$, 
	\begin{align*}
		g(\left[Z_i, \partial_t\right], X) &= g(\nabla_{Z_i}\partial_t, X)-g(\nabla_{\partial_t}Z_i, X)\\
		&= g(\nabla_{Z_i}\partial_t, X)+g(\nabla_X Z_i, \partial_t)\\
		&= g(\nabla_{Z_i}\partial_t, X)-g(\nabla_X \partial_t, Z_i)=0.
	\end{align*}
	Here the second equality follows because $Z_i$ is a Killing vector field; the third equality follows because $Z_i$  is tangential and the last equality is due to the fact that $\partial_t$ is a normal vector to the principal orbit, which is a hypersurface. 
	
	Similarly, $g(\left[Z_i, \partial_t\right], \partial_t)=0$ and we conclude that $\left[Z_i, \partial_t\right]=0$ for all  $1\leq i\leq k$. By Lemma \ref{l3.1}, 
		\begin{align}\label{3.9}
		0=\left[Z_i, \nabla f\right]=\left[Z_i, u \partial_t+u_1 Z_1\right] =\left(\nabla_{Z_i} u\right) \partial_t+\left(\nabla_{Z_i} u_1\right) Z_1+ u_1[Z_i, Z_1] .
	\end{align}
	Taking the inner product with $\partial_t$ yields  
	$$0=\left(\nabla_{Z_i} u\right) g\left(\partial_t, \partial_t\right)+u_1 g\left(\left[Z_i, Z_1\right], \partial_t\right)+\left(\nabla_{Z_i} u_1\right) g\left(Z_1, \partial_t\right)=\nabla_{Z_i} u,$$
	for all $1\leq i\leq k$.	Since $TP$ is spanned by $\left\{ Z_1, Z_2, \ldots, Z_k\right\},$ it follows that $\left.u\right|_{\left(P, g_t\right)}$ is constant for each value of $t$.

	By the soliton equation, $\scal-|\nabla f|^2-2\lambda f$ is constant on the manifold, where $\scal$ denotes the scalar curvature. Since the scalar curvature is invariant under an isometric action, if $\lambda=0$,  then $|\nabla f|$ is also invariant under the isometric action. Therefore, 
	\[
	|\nabla f|^2= u^2+u_1^2\left|Z_1\right|^2
	\] is constant on each $t$. Since $\left.u\right|_{\left(P, g_t\right)}$ is constant for each $t$, so is $\left.\left(u_1^2\left|Z_1\right|^2\right)\right|_{\left(P, g_t\right)}$. Moreover, 
	$$
	a=g\left(\nabla f, Z_1\right)=g\left(u \partial_t+u_1 Z_1, Z_1\right)=u_1\left|Z_1\right|^2.
	$$
	Therefore, each $\left.u_1\right|_{\left(P, g_t\right)}$ and $\left.|Z_1\right|_{\left(P, g_t\right)}$ are constant for each value of $t$.   
\end{proof}

Now we are ready to prove Theorem \ref{t3.3}.

\begin{proof}
	Suppose that $\dim \mathfrak{I}(M)=k$ for $1\leq k<\infty,$ where $\mathfrak{I}(M)$ is the Lie algebra of $\I (M).$ Notice that if $g(Z,\nabla f)=0$ for all $Z\in \mathfrak{I}(M),$ then $f$ is invariant under the action of $\I(M)$ and we are done. Therefore, we may assume that there exists a Killing field $\widetilde{Z}_1$ such that
	$g(\widetilde{Z}_1,\nabla f)\neq 0$.  Then there exists a basis $\left\{\widetilde{Z}_1, \widetilde{Z}_2, \ldots, \widetilde{Z}_k\right\}$ for $\mathfrak{I}(M)$ and we may set \begin{center}
		$a=g\left(\widetilde{Z}_1, \nabla f\right), b_i=g\left(\widetilde{Z}_i, \nabla f\right)$ for all $i\geq 2.$
	\end{center} By Lemma \ref{l3.1}, $a$ and each $b_i$ are constant. Moreover, we have $a\neq 0.$ Now consider the following set $\left\{{Z}_1, {Z}_2, \ldots, {Z}_k\right\}$ where $Z_1=\widetilde{Z}_1$ and 
	$
	Z_i=\widetilde{Z}_i-\frac{b_i}{a} Z_1,\,\forall i \geq 2.
	$
	Then, the set $\left\{{Z}_1, {Z}_2, \ldots, {Z}_k\right\}$ is also a basis for $\mathfrak{I}(M)$ and
	\begin{align}
		g\left(Z_i, \nabla f\right)&=g\left(\widetilde{Z}_i-\frac{b_i}{a} Z_1, \nabla f\right)
		=b_i-\frac{b_i}{a}\cdot a=0,\nonumber
	\end{align}
for all $i \geq 2$.
Using Lemma \ref{basis2}, in a neighborhood around a principal orbit, we have 
$$Z_1	\perp \text{span}\{Z_2, \ldots, Z_k\} \,\,\,\text{and}\,\,\, \nabla f=u \partial_t+u_1Z_1$$
for some smooth functions $u, u_1.$ From Lemma \ref{basis3}, we get that $\left.u\right|_{\left(P, g_t\right)},$ $\left.u_1\right|_{\left(P, g_t\right)}$ and $\left.|Z_1\right|_{\left(P, g_t\right)}$ are constants for each $t.$
	The Codazzi equation and the cohomogeneity one condition imply that, for any tangential vector $X$, $2\Rc(\partial_t, X)=0$. The soliton structure (\ref{grs}) then reads
	\begin{align*}	
		 0 &= (\mathcal{L}_{\nabla f}g)(\partial_t, X)\\
		&= (\mathcal{L}_{u \partial_t} g)(\partial_t, X)+u_1(\mathcal{L}_{Z_1}g)(\partial_t, X)+(\nabla_{\partial_t} u_1)g(Z_1, X)+(\nabla_X u_1)g(Z_1, \partial_t).
	\end{align*}
	Since $Z_1$ is Killing and $u, u_1$ only depending on $t$, we obtain
	$0=(\nabla_{\partial_t} u_1)g(Z_1, X).$
	Since $X$ is an arbitrary tangential vector field and $Z_1$ is nowhere vanishing, we conclude that $\nabla_{\partial_t} u_1=0$. This, in addition to the fact that principal orbits make up a dense subset of $M$, shows that $u_1$ is constant. 
	Define a function $\widetilde{f}: I\times P \to \mathbb{R}$  as 
	$\widetilde{f}=\int u \,dt$. Then, we have \begin{center}
		$\nabla \widetilde{f}=u \partial_t$\,\, and\,\, $\nabla f=\nabla \widetilde{f}+u_1 Z_1.$
	\end{center} Since $u_1 Z_1$
	is a Killing vector field, we get 
	$$
	\Hess f=\Hess \widetilde{f}+\frac{1}{2} \mathcal{L}_{u_1 Z_1} g=\Hess \widetilde{f}.
	$$
	 Let $h= f-\widetilde{f}$, then $\Hess h = 0$, so $\nabla h$ is a parallel vector field. Since $M$ is locally irreducible, $\nabla h \equiv 0\equiv u_1 Z_1$ on the neighborhood $I\times P$.  
As $u_1$ is constant, the identity $u_1 Z_1 \equiv 0$ implies that $u_1=0$. This shows that
$$
a=g\left(Z_1, \nabla f\right)=u_1\left|Z_1\right|^2
=0,$$ which is a contradiction. Hence $f$ is invariant under the action of $\I(M)$, as desired.
\end{proof}

	\section{Almost Maximal Isometry Group}\label{S4}

\quad\quad	
Our objective in this section is to establish the following theorem whose proof implies Theorem \ref{m1.1}.

\begin{theorem}\label{m1.1v2}
		Let $(M, g, J, f, \lambda)$ be a non-trivial GKRS of complex dimension $n\geq 2.$ Suppose that the isometry group of $M$ is of dimension at least $n^2-1$.  Then, the action is of cohomogeneity one and preserves the potential function, $f$. 
		
		Further, the soliton  $M$ is  the smooth compactification at one or both of the ends of $P\times I$, where $I$ is an open interval and $
		\left(P, \eta, \zeta, \Phi, g_P\right)$ is a connected, homogeneous Sasakian manifold 
		that is a
		Riemannian submersion, with circle fibers, over a homogeneous K\"{a}hler-Einstein manifold $\left(N, g_N, J_N\right)$, satisfying the following properties.
		\begin{enumerate}
			\item[\rm 1.] There is a submersion map $\pi: P \mapsto N$ such that
			$$
			g_P=\eta \otimes \eta+\pi^* g_N\quad \textrm{and}\quad d\eta=\pi^* \omega_N.
			$$
			\item[\rm 2.] The open interval $I \subset \mathbb{R}$ is parametrized by $s$ and there exists a diffeomorphism
			$
			\phi: P \times I \longrightarrow M^{\mathrm{reg}},
			$ where $M^{\mathrm{reg}}$ is the regular part of $M$, such that, for constants $A, B, C$,
			$$
			f \circ \phi=B s+C
			$$
			and 
			$$
			\phi^* g=\frac{d s^2}{\alpha(s)}+\alpha(s) \eta \otimes \eta+(2 s+A) \pi^* g_N,$$
			 where 
$\alpha: I \rightarrow(0, \infty)$. 
		
			\item[\rm 3.] Letting $k$ be the Einstein constant of $(N, g_N)$, the function $\alpha$ satisfies  the following first-order ODE:
			\begin{align}\label{o1.1}
				\lambda(2s + A)
				= k - \alpha'(s) - \frac{2n\,\alpha(s)}{2s + A} + B\,\alpha(s),
				\quad \text{for all } s \in I.
			\end{align}		
					\end{enumerate}
In particular, there exists at least one boundary point of $I$ for which $\alpha(s) \rightarrow 0$. 

	\end{theorem}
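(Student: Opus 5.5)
Our plan has three steps. First we show that the action of $G$, the identity component of $\I(M)$, is of cohomogeneity one. Then we invoke Theorem \ref{t1.1} to get invariance of $f$. Finally we compute the shape operator of a principal orbit, so that Theorem \ref{hyperdeformedSasa} and Theorem \ref{HDcontact} apply.

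\textbf{Cohomogeneity one.} By Lemma \ref{c3.1}, $G$ preserves $J$. We argue by contradiction on the dimension of a principal orbit $G\cdot p=G/G_p$. If the orbit had codimension at least $2$, we would use two facts. First, the isotropy $G_p$ acts faithfully on $T_pM$ and commutes with $J$, so $G_p\subset U(n)$. More precisely, $G_p$ embeds into the unitary group of the normal space plus the tangent data, and for a principal orbit the slice representation is trivial. Second, $\dim G=\dim G_p+\dim(G\cdot p)$. Comparing with the bound $\dim \I(M)\le n^2$ of \cite{Hung1}, we would check that codimension $\ge 2$ forces $\dim G<n^2-1$. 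The natural intermediate estimate is this: if $G\cdot p$ has codimension $c\geq 2$, then $G_p$ acts effectively on the tangent space of the orbit, preserving the induced structure (the part of $J$ restricted there). This gives $\dim G_p\le (n-1)^2+\dim$ of small extra blocks, so $\dim G\le (2n-c)+\dim G_p<n^2-1$. We also need to rule out the homogeneous case (codimension $0$). A homogeneous GRS is rigid (Petersen--Wylie), contradicting non-triviality. Once we have cohomogeneity one, Theorem \ref{t1.1} gives $G$-invariance of $f$. Hence $f$ is constant on orbits, its regular level sets are principal orbits, and $f$ is a function of the normal coordinate $t$ with $\nabla f\neq 0$ on $M^{\reg}$.

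\textbf{Shape operator.} By Theorem 4.6 of \cite{Hung1}, $G$ acts on each principal orbit $\Sigma$ by automorphisms of the induced almost contact metric structure $(\Sigma,g_c,\zeta,\eta,\Phi)$. It also preserves $V=\nabla f/|\nabla f|$, hence it commutes with the shape operator $S$, which is $G$-invariant. Thus $\Sigma$ is homogeneous, and the isotropy $H=G_p$ acts on $T_p\Sigma=\mathbb{R}\zeta\oplus D$, fixing $\zeta$ and acting unitarily on the $(n-1)$-dimensional complex space $D$. Since $\dim G\ge n^2-1$ and $\dim\Sigma=2n-1$, we get $\dim H\ge n^2-2n=(n-1)^2-1$. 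A closed subgroup of $U(n-1)$ of this dimension contains $SU(n-1)$, which acts irreducibly on $D$ (for $n\ge 3$; $n=2$ is handled separately below). The $H$-equivariant symmetric endomorphism $S$ therefore has the form $S|_D=\alpha\,\mathrm{Id}$ on $D$, with $S\zeta\in\mathbb{R}\zeta$ since $\zeta$ spans the fixed space. This gives $S=\alpha\,\mathrm{Id}+\beta\,\zeta\otimes\eta$, and homogeneity makes $\alpha,\beta$ constant on $\Sigma$.

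\textbf{Conclusion, and the main obstacle.} If $\alpha=0$ on some orbit, Theorem \ref{hyperdeformedSasa} gives a local product. We expect this to contradict local irreducibility, via the soliton ODE forcing $\alpha\equiv 0$ and then splitting. Otherwise each principal orbit is deformed Sasakian, and Theorem \ref{HDcontact} produces $(P,N)$, the metric ansatz, the ODE \eqref{o1.1}, and the endpoint where $\alpha\to0$. Homogeneity of $P$ follows because it is a principal orbit up to deformation, and the automorphism group is deformation-invariant. Homogeneity of $N$ follows by projecting automorphisms via Lemma \ref{liftsym}. The description as a compactification of $P\times I$ comes from the cohomogeneity one structure, with at most two singular orbits. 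The main obstacle is the representation-theoretic step. Two things must be handled carefully there. One is the case $n=2$, where $D\cong\mathbb{C}$ and $H$ may be $0$-dimensional; there we instead exploit that $\dim G\ge 3=\dim\Sigma$, so $G$ acts with finite isotropy. We would need a separate argument, for instance using invariance of $S$ under a $3$-dimensional transitive group together with the Codazzi equation and the identity $\Rc(V,\cdot)$ from the soliton equation. The other is the cohomogeneity estimate, where the bound on $\dim G_p$ must be made sharp.
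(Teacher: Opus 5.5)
Your outline (cohomogeneity one, then Theorem \ref{t1.1}, then $S=\alpha\,\mathrm{Id}+\beta\,\zeta\otimes\eta$ via Schur's lemma, then Theorems \ref{hyperdeformedSasa} and \ref{HDcontact}) is the paper's. However, the cohomogeneity-one step has a genuine gap, exactly in the borderline case.

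Your slice-representation idea works as far as it goes, once made precise. On a principal orbit, $G_p$ fixes the normal space $\nu_p$ pointwise, hence also $J\nu_p$. It therefore acts faithfully and unitarily on $(\nu_p+J\nu_p)^\perp$. This rules out codimension $\ge 3$, and codimension $2$ with $\nu_p$ not $J$-invariant.

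But when $\nu_p$ is a complex line, the count gives $\dim G\le (n-1)^2+(2n-2)=n^2-1$. That is equality with the hypothesis, not the strict inequality you assert. No sharper count can exist: $\mathrm{PU}(n)$ acting on $\mathbb{CP}^{n-1}\times\Sigma$, for a surface $\Sigma$ without continuous isometries, is a K\"ahler example of precisely this configuration. Invoking $\dim\I(M)\le n^2$ does not help either.

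What is missing is soliton input, namely the argument of Lemma \ref{n3} and Proposition \ref{m2.7}:
\begin{itemize}
\item For $\phi\in G_p$ one has $\Hess(f\circ\phi-f)=0$. Local irreducibility makes $f\circ\phi-f$ constant, and it vanishes at $p$. So $G_p$ fixes $\nabla f|_p$ and $J\nabla f|_p$.
\item Since $\Hess f$ is $J$-invariant, $J\nabla f$ is a Killing field, hence tangent to the orbits.
\item In the borderline case $\dim G_p=(n-1)^2$, so $G_p$ acts on $T_p(G\cdot p)\cong\mathbb{C}^{n-1}$ through all of $U(n-1)$. This action has no nonzero fixed vector.
\item Hence $\nabla f=0$ on the dense set of principal orbits, a contradiction.
\end{itemize}
Combined with your reduction, this settles the matter uniformly for all $n\ge 2$.

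Two other steps are left open, but both close easily as in Lemma \ref{shapeoperator} and the proof of Theorem \ref{m1.1v2}.

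For $n=2$ you need no Codazzi argument. On a level set $S=\Hess f/|\nabla f|$, and $\Hess f$ is $J$-invariant. So $S|_{\mathcal D}$ is a symmetric complex-linear endomorphism of a complex line, hence a real scalar. Moreover $S\zeta\in\mathbb{R}\zeta$ for every $n$: for $X\in\mathcal D$ we have $\Hess f(X,\zeta)=\Hess f(JX,V)=0$, because $V$ is an eigenvector of $\Hess f$ once $|\nabla f|$ is constant on level sets.

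For $\alpha=0$, the paper does not rely on an unspecified ODE argument. It writes $g=dt^2+g_t$ with $g_t=H(t)^2\eta\otimes\eta+g_t^\perp$, and uses $\frac{d}{dt}g_t^\perp=2g(S\cdot,\cdot)|_{\mathcal D}=0$. So the transverse metric is independent of $t$, and the metric is locally a product, contradicting irreducibility.
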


Before proving Theorem \ref{m1.1v2}, we first establish some facts. Since $g$ is a K\"{a}hler metric, we have $J^2=-{\rm Id}$  and $g(J X, J Y)=g(X, Y)$ for all vector fields $X, Y$. Thus,
\begin{align}
	\label{m3.1}
	g(\nabla f, J \nabla f) &=-g\left(J^2(\nabla f), J \nabla f\right)=-g(J \nabla f, \nabla f)=-g(\nabla f, J \nabla f)=0.
\end{align}
We consider the following vector subspaces at a point $p$,
\begin{align*}
	\mathcal{W} &: =\operatorname{span}_{\mathbb{R}}\left\{\left.(\nabla f)\right|_p,\left.(J \nabla f)\right|_p\right\},\\
	\mathcal{W}^{\perp} &:=\left\{X_p \in T_p M \mid g_p\left(X_p, w\right)=0, \forall w \in \mathcal{W}\right\}.
\end{align*}
It follows that, for $p$ in a dense subset of $M$ where $\nabla f\neq \vec{0}$, $T_p M=\mathcal{W} \oplus \mathcal{W}^{\perp}$, $\dim_{\mathbb{R}} \mathcal{W}=2$, and  $\dim_{\mathbb{R}} \mathcal{W}^{\perp}=2n-2$. Recall that by Lemma \ref{c3.1}  the identity component of $\I(M, g)$, which we denote by $G$, also preserves $J$. 
 
\begin{lemma}\label{n3} 	With the hypotheses as in Theorem \ref{m1.1v2}, at
	a point $p$ where $\left.(\nabla f)\right|_p\neq 0$, for all $\phi\in {G}_p$, we have that
	$\left.(d \phi)_p\right|_{\mathcal{W}}=\operatorname{Id}_{\mathcal{W}}$ and $\mathcal{W}^{\perp}$ is  ${G}_p$-invariant.
\end{lemma}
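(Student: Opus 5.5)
The plan is to show that every $\phi\in G_p$ fixes $\nabla f|_p$, and then get the remaining claims from $J$-invariance and orthogonality. The key input is that $f$ is $G$-invariant. I would obtain this directly rather than through the cohomogeneity one statement, which is proved later and may itself rely on the present lemma.

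First, $f$ is $G$-invariant. By Lemma \ref{l3.1}, for every Killing field $X$ the function $g(X,\nabla f)$ is a constant $c_X$. Since $G_p$ is compact, its orbits are compact, and each Killing field generated by $G$ vanishes somewhere on them. More concretely, a Killing field $X$ in the Lie algebra of $G_p$ vanishes at $p$, so $c_X = g(X_p,\nabla f|_p)=0$. Hence $X(f)=0$ for such $X$, and since $G_p$ is compact (Lemma \ref{c2.6}) its identity component preserves $f$ near $p$.

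For a general $\phi\in G_p$ (not necessarily in the identity component of $G_p$) I would use the group $G$ itself. The map $X\mapsto c_X$ is a linear functional on $\mathfrak g$. Since $f\circ\psi - f$ is constant for every $\psi\in G$ (differentiate along a one-parameter subgroup and use that $c_X$ is constant), the map $\psi\mapsto f\circ\psi - f$ is a continuous homomorphism $G\to\mathbb R$. Any $\phi\in G_p$ fixes $p$, so $f(\phi p)-f(p)=0$, and therefore the homomorphism vanishes on $G_p$. In other words $f\circ\phi = f$ for all $\phi\in G_p$.

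Second, the action on $\mathcal W$. Since $\phi$ is an isometry with $f\circ\phi=f$, we have $d\phi(\nabla f)=\nabla f\circ\phi$. Evaluating at $p$ gives $(d\phi)_p(\nabla f|_p)=\nabla f|_p$. By Lemma \ref{c3.1}, $\phi$ preserves $J$, so
\[(d\phi)_p(J\nabla f|_p)=J(d\phi)_p\nabla f|_p=J\nabla f|_p.\]
These two vectors span $\mathcal W$, so $(d\phi)_p|_{\mathcal W}=\operatorname{Id}_{\mathcal W}$.

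Finally, $(d\phi)_p$ is a linear isometry of $T_pM$ that preserves $\mathcal W$, so it preserves $\mathcal W^\perp$: for $X\in\mathcal W^\perp$ and $w\in\mathcal W$,
\[g((d\phi)_pX,w)=g(X,(d\phi)_p^{-1}w)=g(X,w)=0.\]

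The main obstacle is the first step, namely establishing invariance of $f$ under all of $G_p$, including its non-identity components. The homomorphism argument should handle this cleanly. One still needs to check that $f\circ\psi-f$ is constant for every $\psi\in G$. This follows because $G$ is connected and generated by exponentials, and each one-parameter subgroup changes $f$ by $c_X t$.
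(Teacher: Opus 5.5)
Your proof is correct. From the point where $f\circ\phi=f$ for $\phi\in G_p$ is established, it coincides with the paper's argument: fix $\nabla f|_p$, then fix $J\nabla f|_p$ via Lemma \ref{c3.1}, then use orthogonality for $\mathcal W^\perp$.

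The difference lies in how you show that $f\circ\psi-f$ is constant.

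\emph{The paper's route.} It works with a single isometry $\phi$ at a time. Pulling back the soliton equation by $\phi$ gives $\Hess(f\circ\phi-f)=0$. So $\nabla(f\circ\phi-f)$ is parallel, and local irreducibility forces it to vanish.

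\emph{Your route.} You start from the infinitesimal statement of Lemma \ref{l3.1}, namely that $g(X,\nabla f)=c_X$ is constant. Integrating along one-parameter subgroups gives $f\circ\exp(tX)=f+tc_X$. Since the connected group $G$ is generated by exponentials, $\psi\mapsto f\circ\psi-f$ is a constant-valued homomorphism on all of $G$.

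\emph{What each buys.}
\begin{itemize}
\item The paper's argument is self-contained. It applies to every element of $\I(M)$, whether or not it lies in the identity component, and needs only irreducibility.
\item Your argument depends on Lemma \ref{l3.1}, hence on the cited results of \cite{Pete2} and \cite{DT}, which are essentially the infinitesimal form of the same Hessian/irreducibility argument.
\item It also needs connectedness of $G$. That is harmless here, since $G$ is the identity component.
\item Your concern about non-identity components of $G_p$ is handled correctly, because such elements still lie in $G$.
\end{itemize}

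One small point: delete the sentence claiming that each Killing field generated by $G$ vanishes somewhere on the compact $G_p$-orbits. It is false in general. For example, the $G_p$-orbit of $p$ is $\{p\}$, and a Killing field tangent to $G\cdot p$ need not vanish at $p$. Nothing in your final argument uses this claim; your ``more concretely'' sentence and the homomorphism argument supersede it.
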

\begin{proof}
	Since $\phi$ is an isometry, we get
	$$
	0=\phi^*(\Rc+\Hess f)=\phi^*(\Rc)+\phi^*(\Hess f)=\Rc+\Hess\left(\phi^* f\right)=\Rc+\Hess(f \circ \phi).
	$$
	Subtracting the soliton equation for $f$ yields
	$
	\Hess(f\circ\phi - f) = 0.$
	Let $h= f\circ\phi - f$, then $\Hess h = 0$, so $\nabla h$ is a parallel vector field.  An argument analogous to that in the proof of Theorem \ref{t3.3} yields $\nabla h=0$ and, since $M$ is connected, it follows that $h$ is constant. Hence, we get
	$
	f \circ \phi=f+C(\phi),
	$
	for some constant $C(\phi) \in \mathbb{R}$ depending on $\phi$.	In particular, for $\phi\in G_p$, $\phi(p)=p$ and so $h(p)\equiv 0$ and thus, $C(\phi)=0$. Thus, $f$ is invariant by $\phi$ and, consequently,
	$$
	(d \phi)_p\left(\left.(\nabla f)\right|_p\right)=\left.(\nabla f)\right|_p.$$
	By Lemma \ref{c3.1}, $\Phi$ also preserves $J$ and we have:
	$$
	\begin{aligned}
		(d \phi)_p\left(\left.(J \nabla f)\right|_p\right)=\left((d \phi)_p \circ J_p\right)\left(\left.(\nabla f)\right|_p\right) =\left(J_p \circ(d \phi)_p\right)\left(\left.(\nabla f)\right|_p\right)=\left.(J \nabla f)\right|_p .
	\end{aligned}
	$$
	These identities yield that $(d \phi)_p$ acts as the identity on $\mathcal{W}$, that is, $\left.(d \phi)_p\right|_{\mathcal{W}}=\operatorname{Id}_\mathcal{W}.$

	Now, take any $v \in \mathcal{W}^{\perp}$. For any $w \in \mathcal{W}$, since $\phi$ is an isometry and satisfies $\left.(d \phi)_p\right|_{\mathcal{W}}=\operatorname{Id}_{\mathcal{W}}$, we have
	$$
	g_p\left((d \phi)_p(v), w\right)=g_p\left((d \phi)_p(v),(d \phi)_p(w)\right)=g_p(v, w)=0 .
	$$
	Hence $(d \phi)_p(v)$ is orthogonal to every $w \in \mathcal{W}$, which means $(d \phi)_p(v) \in \mathcal{W}^{\perp}$. As $\phi$ is arbitrary, the conclusion follows.  
\end{proof}

From Lemma \ref{c2.6}, we deduce that the restriction $\left.(d \phi)_p\right|_{\mathcal{W}^{\perp}}: \mathcal{W}^{\perp} \rightarrow \mathcal{W}^{\perp}$ is a linear automorphism. Furthermore, since $\phi$ preserves $g$ and $J$, $\left.(d \phi)_p\right|_{\mathcal{W}^{\perp}}$ lies in the unitary group of the Hermitian space $(\mathcal{W}^{\perp}, g, J)$, that is, 
$$
\left.(d \phi)_p\right|_{\mathcal{W}^{\perp}} \in U\left(\mathcal{W}^{\perp}\right):=\left\{A \in {\rm{Gl}}\left(\mathcal{W}^{\perp}\right) \mid A^* g=g, A J=J A\right\}.
$$
We next consider the following isotropy representation
\begin{equation}\label{h3.3}
	\begin{aligned}
		\vartheta_p: G_p & \rightarrow U\left(\mathcal{W}^{\perp}\right) \\
		\phi & \mapsto \vartheta_p(\phi)=\left.(d \phi)_p\right|_{\mathcal{W}^{\perp}}.
	\end{aligned}
\end{equation}
\begin{lemma}\label{n5}
	Let the hypotheses as in Theorem \ref{m1.1v2}, then the isotropy representation $\vartheta_p$ is a Lie group homomorphism and is faithful.  
	Moreover,  
	$\dim(G \cdot p) \geq 2n-2.$
\end{lemma}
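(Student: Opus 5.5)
We need three things: $\vartheta_p$ is a Lie group homomorphism, it is faithful, and $\dim(G\cdot p)\geq 2n-2$. We treat them in that order.

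\emph{Homomorphism.} By Lemma \ref{n3}, every $\phi\in G_p$ preserves $\mathcal{W}^{\perp}$, and by Lemma \ref{c3.1} it commutes with $J$, so $\vartheta_p(\phi)\in U(\mathcal{W}^{\perp})$ is well defined. The chain rule gives $d(\phi\circ\psi)_p=(d\phi)_p\circ(d\psi)_p$ because $\psi(p)=p$. Restricting to the invariant subspace $\mathcal{W}^{\perp}$ then shows $\vartheta_p(\phi\psi)=\vartheta_p(\phi)\vartheta_p(\psi)$. Smoothness follows because $\phi\mapsto(d\phi)_p$ is the smooth linear isotropy representation of the compact Lie group $G_p$ (Lemma \ref{c2.6}), and restriction to a fixed subspace is linear. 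Alternatively, a continuous homomorphism between Lie groups is automatically smooth.

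\emph{Faithfulness.} Suppose $\vartheta_p(\phi)=\mathrm{Id}$. Lemma \ref{n3} gives $(d\phi)_p|_{\mathcal{W}}=\mathrm{Id}$, and $T_pM=\mathcal{W}\oplus\mathcal{W}^{\perp}$, so $(d\phi)_p=\mathrm{Id}_{T_pM}$ and $\phi(p)=p$. An isometry of a connected Riemannian manifold is determined by its value and differential at one point, since both $\phi$ and the identity map geodesics through $p$ identically and $M$ is connected. Hence $\phi=\mathrm{Id}$ and $\vartheta_p$ is injective.

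\emph{Dimension bound.} Since $\vartheta_p$ is an injective Lie group homomorphism, it is an immersion, so
\[
\dim G_p\leq \dim U(\mathcal{W}^{\perp})=\dim U(n-1)=(n-1)^2 .
\]
We have $\dim G=\dim\I(M)\geq n^2-1$, and $G\cdot p\cong G/G_p$. Therefore
\[
\dim(G\cdot p)=\dim G-\dim G_p\geq n^2-1-(n-1)^2=2n-2 .
\]

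The main obstacle is only the rigidity statement used for faithfulness, namely that an isometry fixing $p$ with trivial differential at $p$ is the identity. This is classical: the set where $\phi=\mathrm{Id}$ and $d\phi=\mathrm{Id}$ is open, via the exponential map, and closed, so by connectedness it is all of $M$. Everything else is bookkeeping on dimensions.
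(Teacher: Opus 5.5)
Your proof is correct and follows essentially the same route as the paper. The chain rule gives the homomorphism property. Lemma \ref{n3}, together with $T_pM=\mathcal{W}\oplus\mathcal{W}^{\perp}$ and the rigidity of isometries fixing a point with trivial differential, gives faithfulness. The injection $G_p\hookrightarrow U(n-1)$ then yields $\dim G_p\le (n-1)^2$ and hence $\dim(G\cdot p)\ge 2n-2$. You also spell out the smoothness of $\vartheta_p$ and the open--closed argument for rigidity, which the paper treats as standard.
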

\begin{proof}
	It is straightforward to verify that $\vartheta_p$ is a Lie group homomorphism. 
	Furthermore, we have 
	$
	\ker\left(\vartheta_p\right)=\left\{\phi \in G_p\left|(d \phi)_p\right|_{\mathcal{W}^{\perp}}=\operatorname{Id}_{\mathcal{W}^{\perp}}\right\}.
	$
	For $\phi \in \ker\left(\vartheta_p\right),$ we get $\left.(d \phi)_p\right|_{\mathcal{W}^{\perp}}=\operatorname{Id}_{\mathcal{W}^{\perp}}$. This combined with the fact that $\left.(d \phi)_p\right|_{\mathcal{W}}=\operatorname{Id}_{\mathcal{W}}$ implies $(d \phi)_p=\operatorname{Id}_{T_p M}.$ Consequently, 
	$\phi$ must be the identity map, giving us that  $\vartheta_p$ is injective.

	Since $\dim_{\mathbb{C}} \mathcal{W}^{\perp}=n-1$, $U(\mathcal{W}^{\perp})$ is isomorphic to the unitary group
	$U(n-1)$, and in particular,  $\dim G_p \leq(n-1)^2$. Recall that we assume that $\dim G\geq n^2-1$, so 
	we obtain
	$$
	\dim(G \cdot p)=\dim G-\dim G_p \geq n^2-1-(n-1)^2=2n-2 .
	$$
\end{proof}

\begin{proposition}\label{m2.7} With the hypotheses as in Theorem \ref{m1.1v2}, if $\dim(G)\geq n^2-1$ then $G$ acts by cohomogeneity one.  In particular, the potential function $f$ is invariant under the $G$-action.
\end{proposition}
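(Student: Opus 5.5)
Since the real dimension of $M$ is $2n$ and $G$ is the identity component of $\I(M)$, a principal orbit of cohomogeneity one has dimension $2n-1$. Lemma \ref{n5} gives $\dim(G\cdot p)\ge 2n-2$ at every point $p$ with $\nabla f|_p\neq 0$. The plan is therefore to rule out, at a generic point $p$, the two values $\dim(G\cdot p)=2n-2$ and $\dim(G\cdot p)=2n$.

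\emph{Ruling out $2n$.} If some orbit were open, the action would be transitive on a component, hence on $M$ by connectedness. Then $M$ is homogeneous and $\scal$ is constant. Lemma \ref{l3.1} gives $g(X,\nabla f)=c_X$ for each Killing field $X$. Since Killing fields span $T_pM$, the function $|\nabla f|^2=\scal-2\lambda f-\mathrm{const}$ is controlled. Alternatively, for $\phi\in G_p$ the proof of Lemma \ref{n3} gives $f\circ\phi=f+C(\phi)$, and $\phi\mapsto C(\phi)$ is a homomorphism $G\to\mathbb{R}$. I would argue directly as follows.
\begin{itemize}
\item If $\lambda\ne 0$, Proposition \ref{p2.1} makes $f$ $G$-invariant (irreducibility excludes the splitting). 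Transitivity then forces $f$ to be constant, so the soliton is Einstein, a contradiction.
\item If $\lambda=0$, the identity $\scal-|\nabla f|^2=\mathrm{const}$ shows that $|\nabla f|$ is constant. A homogeneous steady soliton with $\nabla f\neq0$ of constant length: then $\nabla f$ is a Killing-type object, because $\Hess f=-\Rc$ is $G$-invariant. I expect this to reduce to the known fact that homogeneous gradient steady solitons are rigid, for instance via Petersen--Wylie. Equivalently, $\Rc(\nabla f)=0$ together with $|\nabla f|$ constant gives $\Hess f(\nabla f,\cdot)=0$, and the standard argument yields a splitting, contradicting local irreducibility.
\end{itemize}

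\emph{Ruling out $2n-2$.} This is the main step. Suppose the principal orbits had dimension $2n-2$. Then Lemma \ref{n5} must be sharp: $\dim G=n^2-1$ and $\dim G_p=(n-1)^2$, so $\vartheta_p(G_p^0)=U(\mathcal{W}^\perp)\cong U(n-1)$. Now $G_p$ fixes $\mathcal{W}$ pointwise by Lemma \ref{n3}, and $T_p(G\cdot p)$ is $G_p$-invariant by Lemma \ref{c2.6}. Since $U(n-1)$ acts irreducibly on $\mathcal{W}^\perp\cong\mathbb{C}^{n-1}$ with no fixed vectors (using $n\ge2$), the tangent space $T_p(G\cdot p)$ must be exactly $\mathcal{W}^\perp$, being a $(2n-2)$-dimensional invariant subspace. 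The case $n=2$ needs a little care since $U(1)$ acts on $\mathbb{C}$, but there the fixed-vector argument still works. Hence $J\nabla f$ is normal to the orbits. But $J\nabla f$ is Killing on a Kähler soliton: $\nabla f$ is real holomorphic because $\Hess f=\lambda g-\Rc$ is $J$-invariant. Moreover $J\nabla f$ lies in the Lie algebra of $\I(M)$, whose identity component is $G$, so it is tangent to the $G$-orbits. Thus $J\nabla f$ is both tangent and normal at $p$, forcing $J\nabla f|_p=0$, a contradiction.

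The subtle point is justifying that $J\nabla f$ is Killing, which follows from the soliton equation plus the Kähler condition. Also, such points $p$ exist in $M^{\reg}$ because $f$ is non-constant (the soliton is non-Einstein).

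\emph{Conclusion.} Thus principal orbits have dimension $2n-1$, i.e.\ the action is of cohomogeneity one. Theorem \ref{t1.1} then gives that $f$ is $G$-invariant, which is the second assertion.
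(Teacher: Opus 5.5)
Your overall route is the paper's. You rule out transitivity by the rigidity of homogeneous gradient solitons (Petersen--Wylie). You rule out $\dim(G\cdot p)=2n-2$ by playing the $U(n-1)$ isotropy, which fixes $\mathcal{W}$ pointwise, against the Killing field $J\nabla f$, which must be tangent to the orbits. In the transitive steady case it is the citation that carries the step: constancy of $|\nabla f|$ and $\Hess f(\nabla f,\cdot)=0$ only say that integral curves of $\nabla f$ are geodesics, not that $\nabla f$ is parallel.

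\textbf{The gap: the case $n=2$ of the main step.} As a $G_p$-module, $T_pM=\mathcal{W}\oplus\mathcal{W}^\perp$, with $\mathcal{W}$ trivial and $\mathcal{W}^\perp$ nontrivial and irreducible. So the invariant subspaces are exactly $A\oplus B$ with $A\subset\mathcal{W}$ arbitrary and $B\in\{0,\mathcal{W}^\perp\}$.

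\begin{itemize}
\item For $n\ge 3$, the count $2n-2>2$ forces $T_p(G\cdot p)=\mathcal{W}^\perp$, as you say.
\item For $n=2$, $\mathcal{W}$ is itself a $2$-dimensional invariant subspace, and it contains the fixed vector $J\nabla f|_p$. The fixed-vector argument you invoke only excludes $\mathcal{W}^\perp$, so tangency of $J\nabla f$ gives no contradiction.
\end{itemize}

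The paper's one-line representation argument has the same blind spot. It shows only that $T_p(G\cdot p)$ is pointwise fixed by $G_p$, hence contained in $\mathcal{W}$, which is contradictory only when $2n-2>2$.

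\textbf{The fix: use the slice representation.} Choose $p$ with $\nabla f|_p\neq 0$ on a principal orbit.
\begin{enumerate}
\item At a principal point, $G_p$ acts trivially on the normal space $\nu_p$.
\item So $\nu_p$ lies in the fixed subspace of $G_p$ in $T_pM$. That subspace is exactly $\mathcal{W}$, because $\vartheta_p(G_p)=U(n-1)$ fixes no nonzero vector of $\mathcal{W}^\perp$.
\item Since $\dim\nu_p=2$, this gives $\nu_p=\mathcal{W}$ and $T_p(G\cdot p)=\mathcal{W}^\perp$.
\end{enumerate}
Your tangent-versus-normal contradiction for $J\nabla f|_p$ then goes through uniformly for all $n\ge 2$.
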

\begin{proof} Remark that once we show that $G$ acts by cohomogeneity one, we can apply Theorem \ref{t1.1} showing that $f$ is invariant under the $G$-action. 
Let $p$ be an arbitrary point on a regular level set $\Sigma_c= \{f=c\}$.  
    	By Lemma \ref{n5}, $\dim(G \cdot p)\geq 2n-2$ and we want to prove that $\dim(G \cdot p) = 2n-1$. Theorem 1.1 in \cite{Pete2} establishes that homogeneous GRS are rigid. 
    	Since we assume the soliton is non-trivial, it follows that the $G$-action is not transitive and so $\dim(G \cdot p)<2n$. We now  assume that $\dim(G \cdot p) = 2n-2$ to derive a contradiction. In particular, we see that 
	$$
	\dim G_p=\dim G-\dim(G\cdot p)\geq (n^2-1)-(2n-2)=(n-1)^2.$$
	Recall $\vartheta_p : G_p \longrightarrow U(\mathcal W^\perp)\simeq U(n-1)$ is injective by Lemma \ref{n5}. Since $U(n-1)$ is connected and $\dim G_p\geq \dim U(n-1)$, one concludes that $
	\vartheta_p\left(G_p\right)\cong U(n-1) .
	$

Now, it is well-known that 
	when $(M, g, J)$ is K\"ahler and $\Hess f$ is $J$-invariant, then $J\nabla f$  is a Killing vector field, see, for example, Lemma 2.2 of \cite{Hung0}.  Thus, $J\nabla f\in T_p(G\cdot p)$. However, $G_p$ fixes $\mathcal{W}$ pointwise, and since our assumption is that $\dim(T_p(G\cdot p)=2n-2$, this means that at least one dimension of this tangent space is fixed by $G_p$. Since  the lowest-dimensional irreducible representation of $G_p\cong U(n-1)$ is $(2n-2)$-dimensional, this gives us a contradiction and hence $\dim((T_p(G\cdot p)=2n-1$, as desired.
	\end{proof}

By Proposition \ref{m2.7}, the potential function $f$ is invariant under the action of $G$. As described in Subsection \ref{acms}, a regular set of $f$ can be considered as an almost contact metric structure with natural induced tensors $\left(\Sigma_c, g_c, \zeta, \eta, \Phi\right)$. Furthermore, the group of automorphisms on $(M, g, J, f)$ descends to one on $\left(\Sigma_c, g_c, \zeta, \eta, \Phi\right)$ by Theorem 4.6 in \cite{Hung1}.  In particular, it preserves the tensors $\zeta, \eta$, and $\Phi$ on $\Sigma_c$.

For all vector fields $X, Y\in T\Sigma_c,$ we have, for $|\nabla f|$ constant along $\Sigma_c$, 
\begin{align}\label{q4.13}
	(\Hess f)(X, Y) & =g\left(\nabla_X \nabla f, Y\right) =g\left(\nabla_X(|\nabla f| V), Y\right)\nonumber\\
	&=X(|\nabla f|) g(V, Y)+|\nabla f| g\left(\nabla_X V, Y\right) \nonumber\\
	& =|\nabla f| g\left(\nabla_X V, Y\right)=|\nabla f| g\left(SX, Y\right).
\end{align}
Consequently, since $\Hess f$ is $J$-invariant, 
\begin{equation}\label{q4.16} S\circ J=J\circ S \,\, \text{on }\mathcal D, \end{equation}
where $\mathcal{D}:=\ker \eta=\{X \in T \Sigma_c \mid \eta(X)=g(\zeta, X)=0\}\subset T\Sigma_c$ is the transverse subspace. Note that 
$\Phi X=J X$, for all $X \in \mathcal{D}.$

\begin{lemma}\label{key}
	\label{shapeoperator} With the hypotheses as in Theorem \ref{m1.1v2}, locally, there exist functions $\alpha, \beta$ depending on values of $f$ such that 
	$$ S=\alpha\,\mathrm{Id}+\beta\,\zeta\otimes\eta . $$
\end{lemma}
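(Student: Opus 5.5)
We determine $S$ pointwise by Schur's lemma applied to the isotropy representation, and then read off the eigenvalue along $\zeta$ separately.

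Fix a point $p$ on a regular level set $\Sigma_c$. By Proposition \ref{m2.7}, $G$ acts by cohomogeneity one and preserves $f$. Hence $\Sigma_c$ is locally the principal orbit $G\cdot p$, and $G_p$ fixes $\nabla f$ and $J\nabla f$, so it fixes $V$ and $\zeta$. Since each $\phi\in G_p$ is an isometry preserving $f$, it preserves $V$ and the Levi-Civita connection. It follows that $(d\phi)_p$ commutes with the shape operator $S_p$ on $T_p\Sigma_c=\mathrm{span}\{\zeta_p\}\oplus\mathcal{D}_p$. Moreover $\mathcal{D}_p=\mathcal{W}^\perp$, and by Lemma \ref{n5} the group $G_p$ embeds faithfully in $U(\mathcal{D}_p)\cong U(n-1)$.

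\textbf{Step 1: size of $G_p$.} Since $\dim(G\cdot p)=2n-1$,
\[
\dim G_p\ge n^2-1-(2n-1)=n^2-2n=(n-1)^2-1 .
\]
So $\vartheta_p(G_p)$ is a closed subgroup of codimension at most one in $U(n-1)$. For $n-1\ge 2$ such a subgroup contains $SU(n-1)$. The reason is that the identity component of a codimension $\le 1$ closed subgroup has Lie algebra of codimension $\le1$ in $\mathfrak{u}(n-1)=\mathfrak{su}(n-1)\oplus\mathbb{R}$, and $\mathfrak{su}(n-1)$, being simple of dimension $>1$, has no codimension-one subalgebras. Hence the subalgebra contains $\mathfrak{su}(n-1)$.

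\textbf{Step 2: Schur's lemma.} Since $SU(n-1)$ acts irreducibly on $\mathbb{C}^{n-1}$, the operators commuting with it are the complex scalars $a+bJ$. So $S_p$ preserves $\mathcal{D}_p$ (it commutes with $G_p$, and $\mathcal{D}_p$ is the orthogonal complement of the fixed vector $\zeta_p$), and $S|_{\mathcal{D}_p}=a\,\mathrm{Id}+bJ$. The shape operator is symmetric, while $J$ is skew, so $b=0$. Consequently $S\zeta=\mu\zeta+w$ with $w\in\mathcal{D}_p$. This $w$ is $G_p$-fixed, and $SU(n-1)$ fixes no nonzero vector in $\mathbb{C}^{n-1}$, so $w=0$. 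Therefore
\[
S=a\,\mathrm{Id}+(\mu-a)\,\zeta\otimes\eta ,
\]
and we set $\alpha=a$, $\beta=\mu-a$.

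\textbf{Step 3: the case $n=2$.} Here $U(1)$ acts on $\mathcal{D}_p\cong\mathbb{C}$, and $G_p$ might be finite, so the argument above breaks down. We argue instead via commutation with $J$. By \eqref{q4.16}, $S$ commutes with $J$ on $\mathcal{D}$, so on the $2$-dimensional space $\mathcal{D}$ it has the form $a\,\mathrm{Id}+bJ$, and symmetry forces $b=0$. It remains to kill the mixed term $g(S\zeta,X)$ for $X\in\mathcal{D}$. Since $g(SX,\zeta)=\mathrm{Hess} f(X,\zeta)/|\nabla f|$, it suffices to show that $\mathrm{Hess} f(X,J\nabla f)=0$. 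Because $J\nabla f$ is Killing and $f$ is $G$-invariant, $J\nabla f$ commutes with $\nabla f$, so $\nabla_{J\nabla f}\nabla f=\nabla_{\nabla f}J\nabla f=J\nabla_{\nabla f}\nabla f$. Hence
\[
\mathrm{Hess} f(J\nabla f,X)=g\bigl(J\nabla_{\nabla f}\nabla f,X\bigr)=-\tfrac12\, g\bigl(\nabla|\nabla f|^2,JX\bigr)=0,
\]
because $|\nabla f|^2$ is a function of $f$ by invariance and $JX\in\mathcal{D}$ is orthogonal to $\nabla f$. This computation works in every dimension and could replace the fixed-vector argument in Step 2.

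\textbf{Step 4: dependence on $f$.} Since $G$ preserves $f$, $V$ and $g$, the quantities $a=\frac{1}{2n-2}\operatorname{tr}(S|_{\mathcal{D}})$ and $\mu=g(S\zeta,\zeta)$ are $G$-invariant. They are therefore constant on orbits, that is, on the level sets, and so locally they are functions of $f$.

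\textbf{Main obstacle.} The delicate step is the low-dimensional case: when $G_p$ is small (finite, or just a torus), the Schur argument gives nothing. What rescues it is the commutation $S\circ J=J\circ S$ from \eqref{q4.16} combined with the Killing property of $J\nabla f$, and this needs to be checked carefully.
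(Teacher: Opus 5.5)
Your proof is correct and follows essentially the paper's route. For $n\ge 3$ you use Schur's lemma with $SU(n-1)\subset\vartheta_p(G_p)$, and your Lie-algebra argument usefully justifies a containment the paper only asserts. For $n=2$ you use complex-linearity from \eqref{q4.16} plus symmetry. The vanishing of $\Hess f(\mathcal{D},\zeta)$ comes from $\nabla_{\nabla f}\nabla f\parallel\nabla f$ together with $J$-invariance; the paper uses this for all $n$, where you instead use the fixed-vector argument for $n\ge 3$.

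One small correction is needed. For $n=3$, the real commutant of $SU(2)$ on $\mathcal{D}_p\cong\mathbb{R}^4$ is the quaternions, not $\mathbb{C}$. So in Step 2 you should apply Schur to the complex-linear $\mathcal{D}$-block of $S$ given by \eqref{q4.16}, or else note that symmetry kills the imaginary quaternionic parts. Either way you still get $S|_{\mathcal{D}_p}=a\,\mathrm{Id}$ as claimed.
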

\begin{proof}
	First, as $f$ is invariant under the cohomogeneity one action, integral curves of $\nabla f$ are reparametrized geodesics. Thus, $\nabla f$ is an eigenvector of $\Hess f$. Using the fact that  $\Hess f$ is $J$-invariant, for $X \in \mathcal{D}$ we have
	$$
	(\Hess f)(X, \zeta)=(\Hess f)(J X, J \zeta)=(\Hess f)\left(JX, \frac{\nabla f}{|\nabla f|}\right)=0.
	$$
	Equation \eqref{q4.13} then implies that 
	$g\left(SX, \zeta\right)=0$, for all $X \in \mathcal{D}.$
	It follows that $S(\mathcal D)\subset\mathcal D$ and $S\zeta=\mu\,\zeta$, where 
	\begin{align*}
		\mu &= g(S \zeta, \zeta)= \frac{1}{|\nabla f|}\Hess f(\zeta, \zeta)= \frac{1}{|\nabla f|^2}\nabla_{\nabla f}|\nabla f|,
	\end{align*}
	depends only on values of $f$. 
	
	We now determine $\left.S\right|_{\mathcal{D}}$ by considering the following two 
	cases: Case 1, where $n=2$, and Case 2, where $n\geq 3$.\\
	
	\noindent\textbf{Case 1}: $n=2$. Then $\dim_{\mathbb C}\mathcal D=1$.  Display \eqref{q4.16} gives us that $S|_{\mathcal D}$ is complex linear and, since $S$ is self-adjoint, $S|_{\mathcal D}=\alpha\,\mathrm{Id}_{\mathcal D}$ for 
	\begin{align*}
		\alpha &= \frac{1}{2}[\text{trace}(\Hess f)- 2\mu]=\frac{1}{2}(\Delta f- 2\mu).
	\end{align*}
	We see from the soliton equation, $\Delta f=2\lambda-\scal$, that $\Delta f$ depends only on values of $f$. Thus, so does $\alpha$.

	\noindent\textbf{Case 2}: $n\geq 3$. Since $\dim G\geq n^2-1$ and $\dim\Sigma_c=2n-1$, we have \[ \dim G_p\geq (n^2-1)-(2n-1)=(n-1)^2-1. \] Since $G$ preserves $g$, $J$, $\zeta$, $\eta$, and $\Phi$, the isotropy group $G_p$ acts on $\mathcal D_p$ by unitary transformations. Hence,  $G_p\subset U(n-1).$ Note that a connected subgroup of $U(n-1)$ of dimension at least $(n-1)^2-1$ is conjugate to $SU(n-1)$ or $U(n-1)$, either of which acts by a standard irreducible representation on ${\mathcal{D}_p}\cong \mathbb C^{n-1}$. Since $S$ is $G$-invariant, $\left.S_p\right|_{\mathcal{D}_p}$ commutes with the isotropy representation of $G_p$. Since the $G_p$-action on $\mathcal{D}_p$ is irreducible and complex, Schur's lemma implies that
	$$
	\left.S_p\right|_{\mathcal{D}_p}=\alpha(p)\operatorname{Id}_{\mathcal{D}_p},
	\quad \alpha(p)\in\mathbb{C}.
	$$
	Since $S$ is self-adjoint, the scalar $\alpha(p)$ must be real. Furthermore, as the $G$-action is transitive on $\Sigma_c$, $\alpha$ depends only on values of $f$. 

	Therefore, in both cases, there exist smooth functions $\alpha$ and $\mu$ such that $$S X=\alpha X \,\,(\forall X\in\mathcal D), \qquad S\zeta=\mu\,\zeta.$$ The result then follows for $\beta:=\mu-\alpha$. \end{proof}

We are now ready to prove the main result of this section. 

\begin{proof}[Proof of Theorem \ref{m1.1v2}]
	Proposition \ref{m2.7} gives us that the action of $G$ on $M$ is by cohomogeneity one and that the potential function $f$ is invariant under the $G$-action. Thus, the level sets of $f$, $\Sigma_c$, correspond to principal orbits of the $G$-action. 
	 Let $\Sigma_c$ be a connected component of a level set of $f$ corresponding to a regular value. By Lemma \ref{shapeoperator} and Theorem \ref{hyperdeformedSasa}, $\left(\Sigma_c, g_c, \zeta, \eta, \Phi\right)$ is either a deformed Sasakian structure or locally a Riemannian product depending on whether, for $\alpha$ as in Lemma \ref{shapeoperator}, $\alpha\neq 0$ or $\alpha=0$, respectively. By continuity, nearby regular connected components must all be of the same diffeomorphism type. In case $\alpha\neq 0$, Theorem \ref{HDcontact} then gives the desired result.

		Thus, we are left to consider the case where $\alpha= 0$. Then each $\zeta_c$ is a parallel vector field on $\Sigma_c$ with respect to the induced metric, which is locally a Riemannian product. Since $G$ acts by cohomogeneity one on $M$, the metric around a principal orbit can be written as, 		$$g =dt^2+ g_t, \nonumber$$
	for $t$ the parameter of a geodesic perpendicular to each principal orbit.
Since each $g_t$ is locally a Riemannian product with parallel vector field $\zeta=-J\frac{\nabla f}{|\nabla f|}$,
	\begin{align*}
		g_t &= H^2(t)\eta\otimes \eta+ g^\perp_t,
	\end{align*} 
	where $H(t)$ is a real-valued function depending on $t$ only. By comparing covariant and ordinary derivatives and observing that $\frac{\partial}{\partial_t}$ is a unit normal vector to the hypersurface, we have,
	\[\ddt g^\perp_t =2g\circ S .\]
	Since $\alpha=0$, $S$ restricted to the transverse subspace vanishes. Thus, 
	\[\ddt g^\perp_t =2g\circ S =0.\]
	That is, $g^\perp_t$ is independent of $t$ and the metric is a local Riemannian product, a contradiction to being locally irreducible. 
	\end{proof}

	\section{Dimension Four}\label{S5} 
	
	\quad	  
Let $(M, g, J, f, \lambda)$ be a non-trivial GKRS of complex dimension two. Suppose that $\dim \I(M,g)\geq 3$. In this section, we show that $(M, g, J, f, \lambda)$ is constructed from a connected homogeneous Sasakian manifold of constant holomorphic sectional curvature and it is of maximal symmetry; that is, the isometry group is of dimension four.

As usual, let $G$ denote the identity component of the isometry group and it follows $\dim G \geq 3$. For a regular value $c\in f(M)$, following the construction in Subsection \ref{acms}, $\left(\Sigma_c, g_c, \zeta, \eta, \Phi\right)$ defines an almost contact metric structure on $\Sigma_c$. From Theorem \ref{m1.1v2}, we know that $f$ is invariant by the action of $G$. Furthermore, each level set of $f$ is connected and each regular one with the induced almost contact metric structure is a deformation, in the sense of Definition \ref{Sdeform}, of a Sasakian manifold. Furthermore, this Sasakian structure is a Riemannian submersion with circle fibers and the base is a K\"{a}hler-Einstein manifold with constant Ricci curvature, $(N, g_N)$. That is, there is a submersion map $\pi: P \mapsto N$ such that
	$$
	g_P=\eta \otimes \eta+\pi^* g_N\,\,\,\textrm{and}\,\,\, d \eta=\pi^* \omega_N.
	$$ Consequently, the  metric on the manifold can be written as 	in Theorem \ref{m1.1v2}
		$$
	g_s=\alpha(s) \eta\otimes \eta+\beta(s) \pi^* g_N,\,\,\, \textrm{with} \,\,\,\beta(s)=2 s+A>0,$$
	where  $s$ corresponds to the parameter of the orbit space. 
	
	\begin{lemma}\label{l1} The Sasakian model described above has constant holomorphic sectional curvature. 
	\end{lemma}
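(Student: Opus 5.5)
The plan is to reduce the statement to the base $N$ of the submersion $\pi:P\to N$. For a Sasakian structure with $g_P=\eta\otimes\eta+\pi^*g_N$ and $d\eta=\pi^*\omega_N$, O'Neill's formulas give, for a unit horizontal $X$,
\[
K_P(\overline{X},\Phi\overline{X}) = K_N(X,J_NX) - 3\,|A_{\overline X}\Phi\overline{X}|^2 = K_N(X,J_NX)-3 .
\]
Hence $P$ has constant holomorphic sectional curvature if and only if $(N,g_N,J_N)$ has constant holomorphic sectional curvature. Since $n=2$, $N$ is a real two-dimensional surface, and $J_N$-invariant planes are the whole tangent plane. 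So the holomorphic sectional curvature of $N$ is just its Gaussian curvature, and it remains to show this curvature is constant.

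By Theorem \ref{m1.1v2}, $N$ is a homogeneous K\"ahler--Einstein manifold with $\Rc_N=k\,g_N$. In real dimension two this already says the Gaussian curvature equals $k$, a constant. I would still make the homogeneity of $N$ explicit, since the remainder of Section \ref{S5} will likely rely on it. The argument is as follows.

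$G$ acts transitively on each principal orbit $\Sigma_c\cong P$, preserving $\zeta,\eta,\Phi$. The induced metric $g_s=\alpha(s)\eta\otimes\eta+\beta(s)\pi^*g_N$ differs from $g_P$ by an $(+,H,F)$-deformation, which preserves the infinitesimal automorphisms by the remark after Definition \ref{Sdeform}. So $G$ acts transitively by automorphisms of the Sasakian model. Because these automorphisms preserve $\zeta$, they map Reeb leaves to Reeb leaves, and by Lemma \ref{liftsym} they project to isometries of $N$ preserving $J_N$. Transitivity on $P$ then gives transitivity on $N$. A homogeneous Riemannian surface has constant Gaussian curvature, namely $k$ by the Einstein condition.

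Combining this with the O'Neill computation gives holomorphic sectional curvature $k-3$ on $P$, up to the normalization of $\eta$. The main obstacle is bookkeeping rather than substance: one must check the normalization constant in $d\eta=\pi^*\omega_N$, which makes the O'Neill tensor satisfy $|A_{\overline X}\Phi\overline X|=1$ (or another fixed constant). One must also confirm that the holomorphic curvature of the Sasakian model $g_P$ is meant, not that of the deformed metrics $g_s$. For the deformed metrics the same formula gives $K_N/\beta(s)-3\alpha(s)/\beta(s)^2$, which is still constant on each orbit. So the conclusion holds either way, and I would note this explicitly.
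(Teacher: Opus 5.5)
Your proposal is correct and follows essentially the paper's route. The paper likewise observes that $N$ is a real surface, so $\Rc_N=k\,g_N$ forces constant Gaussian curvature. It then invokes the Sasakian identity relating holomorphic sectional curvature to transverse curvature (Lemma 7.3.13 of Boyer--Galicki), which is exactly your O'Neill computation. Your homogeneity detour via Lemma \ref{liftsym} is unnecessary, and it relies on regularity and compact leaves, which are only checked later in Lemma \ref{countdim}; but, as you note yourself, the Einstein condition alone already suffices.
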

	\begin{proof} 		As we see above, the transverse metric on each regular level set is a multiple of the K\"{a}hler-Einstein metric. Since $M$ is of real dimension four, the K\"{a}hler-Einstein manifold, $N$,  is of real dimension two and constant Ricci curvature on $N$ implies  the sectional curvature of $N$ is constant. Furthermore, on any Sasakian manifold, the curvature tensor satisfies an identity relating the holomorphic sectional curvature to the transverse curvature by Lemma 7.3.13 in \cite{CK}. Thus, the Sasakian structure from Theorem \ref{m1.1v2} has constant holomorphic sectional curvature.\\
	\end{proof}
	
	Proposition 4.1 of Tanno \cite{Ta2}, classifies complete, simply connected Sasakian manifolds of constant holomorphic curvature. 
	In particular, we see that the universal cover of $P$ must be precisely one of the following Sasakian space forms: 
	$$ \mathrm{SU}(2)\cong S^3 \,\,(k>0), \quad \mathrm{Nil}^3 \,\, (k=0), \,\,\, \textrm{and} \,\,\, \mathrm{\widetilde{SL}}(2,\mathbb{R}) \,\,\, (k<0),$$
	where $\kappa$ is the sectional curvature of the transverse metric. Topologically, the group $\mathrm{SU}(2)$ is diffeomorphic to the sphere $S^3$, hence it is compact and simply connected. In contrast, both $\mathrm{Nil}^3$  and $\mathrm{\widetilde{SL}}(2,\mathbb{R})$ are non-compact, simply connected Lie groups diffeomorphic to $\mathbb{R}^3$.

\begin{lemma} With the hypotheses as above, $N$ is homeomorphic to either  $S^2$, $\R^2$,  $T^2$ or  $\R \times S^1$.	
\end{lemma}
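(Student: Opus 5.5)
We will identify $N$ as the leaf space of the Reeb foliation of $P$, and then use the fact that $P$ is a quotient of one of the three Sasakian space forms listed above.

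First, $N$ is a connected surface. By Theorem \ref{m1.1v2}, $N$ is the base of a Riemannian submersion $\pi:P\to N$ with circle fibers, and $P$ is connected. So $N$ is a connected $2$-manifold carrying a complete metric of constant curvature $\kappa$, a multiple of $k$: on a Riemann surface the Einstein condition means constant Gaussian curvature, as in Lemma \ref{l1}. Moreover $N$ is homogeneous and orientable, since it is Kähler.

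Second, pass to universal covers. The universal cover $\widetilde N$ is $S^2$, $\R^2$ or the hyperbolic plane $\mathbb H^2\cong\R^2$, according to $\kappa>0$, $\kappa=0$ or $\kappa<0$. This matches the universal cover $\widetilde P$ being $\mathrm{SU}(2)$, $\mathrm{Nil}^3$ or $\widetilde{\mathrm{SL}}(2,\mathbb R)$, fibering over $S^2$, $\R^2$ or $\mathbb H^2$. So $N=\widetilde N/\Gamma$ with $\Gamma$ acting freely and properly by orientation-preserving isometries. Homogeneity of $N$ is the key constraint. Its isometry group acts transitively, so it must contain a transitive connected group, and this forces $\Gamma$ to be normalized by a transitive connected group of isometries of $\widetilde N$.

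Third, split into cases by the sign of $\kappa$.
\begin{itemize}
\item For $\kappa>0$: every orientation-preserving isometry of $S^2$ has a fixed point, so $\Gamma$ is trivial and $N=S^2$.
\item For $\kappa=0$: $\Gamma$ is a discrete group of translations, which gives $\R^2$, a cylinder $\R\times S^1$, or a torus $T^2$. Klein bottles and Möbius bands are excluded by orientability.
\item For $\kappa<0$: a nontrivial discrete $\Gamma$ would have to be normalized by a connected subgroup of $PSL(2,\R)$ acting transitively on $\mathbb H^2$. Such a subgroup has dimension at least $2$, and a discrete subgroup normalized by a connected group is centralized by it. The centralizer in $PSL(2,\R)$ of a nontrivial element is one-dimensional, which is a contradiction. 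Hence $\Gamma$ is trivial and $N\cong\mathbb H^2$, homeomorphic to $\R^2$.
\end{itemize}

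The main obstacle is the negative curvature case. There we must rule out compact hyperbolic surfaces, and also quotients such as the punctured disc or a hyperbolic cylinder. Both are excluded by homogeneity through the centralizer argument above. Alternatively, one can observe that a hyperbolic cylinder has a closed geodesic of minimal length through only some points, so it is not homogeneous.

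A secondary point is completeness of $g_N$. It follows from the completeness of the homogeneous metric $g_P$ together with $\pi$ being a Riemannian submersion, since horizontal lifts of geodesics are geodesics.
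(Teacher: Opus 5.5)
Your proof is correct, but it takes a different route from the paper's.

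\textbf{The paper's route.} The paper splits on the sign of $\lambda$.
\begin{itemize}
\item For $\lambda\ge 0$, it imports the ODE analysis of the earlier maximal-symmetry paper to get $\kappa>0$, hence $N\cong S^2$.
\item For $\lambda<0$, it passes to a singular orbit. This is a component of the zero set of the Killing field $J\nabla f$, so it is totally geodesic of even codimension. If it is a point, the principal orbits are spheres and $N\cong S^2$. If it is a surface, the paper quotes the list of homogeneous two-dimensional space forms ($S^2,\RP^2,\R^2,T^2,\R\times S^1,\mathbb H^2$) and discards $\RP^2$ and $\mathbb H^2$. This step implicitly identifies that surface with $N$.
\end{itemize}

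\textbf{Your route.} You work directly on $N$, using only what Theorem \ref{m1.1v2} already provides: homogeneity, orientability, and constant curvature (completeness also follows from homogeneity). From these you derive the list of homogeneous orientable complete space forms yourself.

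\textbf{What your route buys.}
\begin{itemize}
\item It is independent of $\lambda$ and of the singular-orbit structure.
\item Your normalizer/centralizer argument in $\mathrm{PSL}(2,\R)$ is exactly the ingredient needed to exclude nontrivial hyperbolic quotients, including cusps and hyperbolic cylinders. The paper handles these only by citation.
\item It avoids a slip in the paper: the paper excludes $\mathbb H^2$ on the grounds that it admits no complex structure. That claim is false, though harmless, since $\mathbb H^2$ is homeomorphic to $\R^2$. You treat it correctly in exactly that way.
\end{itemize}

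\textbf{What the paper's route buys.} It gives sharper information in special cases. When $\lambda\ge 0$, or when the singular orbit is a point, it pins down $N\cong S^2$, and this is what Corollary \ref{c1.8} later relies on. Your argument yields only the four-way list, which is all this lemma asserts.
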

\begin{proof} First, we consider the case $\lambda \geq 0$. 
	 An analysis of the soliton ODE in Proposition 3.4 and the proof of Theorem 1.2 in \cite{Hung1} shows that completeness and non-triviality imply $\kappa>0$. Thus, $(N, g_N)$ is isometrically homothetic to either the round $S^2$ or its quotient $\mathbb{RP}^2$. Since $(N, g_N)$ is K\"ahler, it is orientable, and hence $N\cong S^2.$

	In the case where $\lambda<0$, each Sasakian model could arise in the construction of an expanding gradient Ricci soliton \cite{DaW}. By Theorem \ref{m1.1v2}, there exists a singular orbit, corresponding to a critical value of the potential function; that is, on the singular orbit, $\nabla f=0$. Thus, each connected component of a singular orbit is a zero set for the Killing vector field $J\nabla f$. By \cite{Ko2}, it is a totally geodesic submanifold of even codimension. Thus, it is either a point or a $2$-dimensional real surface.

	If the singular orbit consists of a single point, then, since a principal orbit fibers over any singular orbit with fiber a sphere, 
each 
principal orbit must be diffeomorphic to a sphere and corresponds to the Sasakian structure with $\kappa>0$.
In this case, $N$, being the quotient of a circle fibration of $S^3$, must be $S^2$.

	If the singular orbit is a $2$-dimensional surface, then it is homogeneous with respect to the induced metric and almost complex structure. Thus, it is diffeomorphic to a one-dimensional complex homogeneous surface with constant curvature. The homogeneous space forms of dimension two of constant curvature are $S^2$, $\RP^2$, $\R^2$, $T^2$, $\R\times S^1$, and $H^2$, see, for example, \cite{Wof}. Since neither $\RP^2$ or $H^2$ admits a complex structure, the singular orbit must be homeomorphic to one of $S^2$,  $\R^2$, $T^2$, or $\R\times S^1$.\\
\end{proof}

	In summary, the Sasakian structure is a discrete quotient of a Sasakian space form in dimension three. There is a Riemannian submersion from this Sasakian manifold to a surface of constant curvature. This surface must be either the round $S^2$, 
	the flat torus, or the straight 
	 cylinder.\\ 
	
	\begin{lemma}
		\label{countdim} With the hypotheses as above, 
		then $N$ must be homeomorphic to $S^2$ or $\R^2$ and the dimension of the automorphism group is four.  
	\end{lemma}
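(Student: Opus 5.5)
We compare dimensions using the projection of symmetry from $P$ to $N$ (Lemma \ref{liftsym}), combined with $\dim G\geq 3$. By Theorem \ref{m1.1v2} and the preceding lemmas, $G$ acts on each principal orbit $\Sigma_c\cong P$ by automorphisms of the (deformed) Sasakian structure. By the remark after Definition \ref{Sdeform}, these are the same as automorphisms of the Sasakian model $(P,\eta,\zeta,\Phi,g_P)$. The foliation by $\zeta$ is regular with compact circle leaves, since $P\to N$ is a Riemannian submersion with circle fibers. Hence Lemma \ref{liftsym} applies: the Lie algebra $\mathfrak{g}$ of $G$ maps to the infinitesimal automorphisms of $(N,g_N,J_N)$, with kernel contained in $\mathrm{span}\{\zeta\}$. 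The Reeb field $\zeta$ is a multiple of $J\nabla f$, which is Killing and lies in $\mathfrak g$. So $\dim \mathfrak g = 1+\dim(\text{image})$, and the image has dimension at least $2$. Moreover, the image consists of holomorphic Killing fields $X$ on $N$ for which $W(X,\cdot)=\omega_N(X,\cdot)$ is exact.

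We then go through the four cases of the previous lemma. For $N=S^2$ round, $H^1(S^2)=0$, so every holomorphic Killing field lifts. The automorphism algebra of $S^2$ is $\mathfrak{so}(3)$, giving $\dim \mathfrak g\le 1+3=4$. For $N=\R^2$ flat, we also have $H^1=0$, and the holomorphic isometries form $\mathbb{R}^2\rtimes SO(2)$, of dimension $3$; again all lift, giving the Heisenberg-type group $\mathrm{Nil}^3\rtimes S^1$ of dimension $4$. For $N=T^2$ or $\R\times S^1$ flat, the holomorphic Killing fields are spanned by the parallel translations, of dimension $2$ for $T^2$ and $2$ for the cylinder (the rotation of $\R^2$ does not descend). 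For a parallel field $X$, the $1$-form $\omega_N(X,\cdot)=g_N(JX,\cdot)$ is parallel, hence harmonic and non-zero. So it is not exact on $T^2$, and not exact on $\R\times S^1$ when $JX$ has a component along the $S^1$ direction. The step requiring care is the cylinder: writing $N=\R\times S^1$ with coordinates $(x,\theta)$, the translations $\partial_x,\partial_\theta$ give forms $d\theta$ and $-dx$ (up to sign). Here $dx$ is exact but $d\theta$ is not, so only a one-dimensional subspace lifts. In both cases the image has dimension at most $1$ (for $T^2$ it is $0$), so $\dim\mathfrak g\le 2$, contradicting $\dim G\ge 3$. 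This excludes $T^2$ and $\R\times S^1$.

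Finally, equality must be shown in the remaining cases. For the upper bound, $\dim G\le n^2=4$ by \cite{Hung1}, consistent with the counts above. For the lower bound, the whole lifted automorphism algebra of $P$ (dimension $4$) must act on $M$. Since every such automorphism of $P$ preserves $\eta$, $\pi^\ast g_N$ and $d s$, it preserves the metric $\phi^\ast g$ in the form of Theorem \ref{m1.1v2} on $M^{\reg}$. Isometries extend across the singular orbits by continuity and density, using completeness. Hence $\dim \I(M)\ge 4$, and therefore $\dim \I(M)=4$. The main obstacle is this extension step together with the cohomological computation on the cylinder. For the extension, I would argue via the Killing fields: a Killing field on the dense open set $M^{\reg}$ of a complete real-analytic (soliton) metric extends to all of $M$.
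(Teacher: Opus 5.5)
Your argument for the lemma is the paper's: project $\mathfrak g$ to the leaf space $N$ via Lemma \ref{liftsym}; when $H^1(N)=0$ all three automorphisms lift, giving dimension four; on $T^2$ or $\R\times S^1$ the non-exact forms $\omega_N(X,\cdot)$ cap the dimension at $2<3$, and your explicit $dx$ versus $d\theta$ check on the cylinder makes the paper's one-line claim precise. Your extra step from $\mathrm{Aut}(P)$ to $\I(M)$, which the paper leaves implicit, is sound here but not for the general reason you give, since a Killing field on a dense open strip of a flat cylinder need not extend; it works because the singular orbits are zero sets of the Killing field $J\nabla f$, hence of codimension at least two, so either a local Nomizu-type extension or extension of isometries of $M^{\reg}$ by completeness goes through.
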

	\begin{proof} By the proof of Theorem \ref{m1.1v2}, the Riemannian submersion from the model Sasakian to $(N, g_N, J_N)$ has each trajectory generated by the Reeb vector field mapped to a point. Thus, one immediately verifies that $(N, g_N, J_N)$ is homothetic to the leaf space with induced structure tensors as described in Subsection \ref{liftofsymmetry}. 
	Furthermore, on a $K$-contact metric structure, homogeneity implies regularity by Theorem 4 in Boothby \cite{Boo}. And by Theorem \ref{m1.1v2}, each leaf is homeomorphic to a circle and, hence, compact. Thus, we can apply Lemma \ref{liftsym} for each potential homeomorphism type of $N$. 
	
	\textit{Case $N\cong S^2$ or $\R^2$:} It is well known that the automorphism group of $(N, g_N, J_N)$ is,  
	of dimension three; thus, so is its Lie algebra. Since the first cohomology group of $N$ is trivial, Lemma \ref{liftsym} asserts that the Lie algebra of infinitesimal automorphisms on the Sasakian model is of dimension four.

	\textit{Case $N\cong T^2 \text{ or } \R \times S^1$:} On a flat torus or a cylinder, the Lie algebra of infinitesimal automorphisms is of dimension two while there is at least one infinitesimal automorphism associated with a non-trivial cohomology class. Thus, by Lemma \ref{liftsym}, the Lie algebra of infinitesimal automorphisms on the Sasakian model must be of dimension at most $1+1=2$. The result then follows. \\
	\end{proof}

	We are now ready to prove Theorem \ref{t3.1}.
	\begin{proof}[Proof of Theorem \ref{t3.1}]
	To conclude the proof, we observe that, by Lemma \ref{c3.1} and Theorem \ref{t1.1} respectively, an isometric action also preserves the almost complex structure and the potential function. Thus, it descends to a group of automorphisms on the almost contact metric structure on each regular level set of $f$ by Theorem 4.6 \cite{Hung1}. By Theorem \ref{m1.1v2}, each almost contact metric structure is a deformation of a Sasakian model. Thus, the Sasakian model is homogeneous and the Lie algebra of infinitesimal automorphisms is of dimension at least three. Then Lemma \ref{countdim}  gives the desired conclusion.  
	\end{proof}

\begin{proof}[Proof of Corollary \ref{c1.8}]
 We argue as in the proof of Theorem \ref{t3.1} to obtain $\kappa>0$ and $N\cong S^2$. Since $P$ is an $S^1$ bundle over $S^2$, the total space $P$ is compact. The classification of such bundles gives us that $P$ is either $S^1\times S^2$, or a $3$-dimensional lens space,  $L_{p, 1}$, by work of Steenrod \cite{St} ($L_{1,1}=S^3$). Since the manifold is locally irreducible, $P$ cannot be $S^1\times S^2$ by an argument similar to one in the proof of Theorem \ref{m1.1v2}. Then the result holds.
\end{proof}
  \section*{Acknowledgements}
\quad\quad This material is based upon work supported by the National Science Foundation under Grant No. DMS-1928930, while the second- and third-named authors were in residence at the MSRI in Berkeley, California, during
the Fall semester of 2024.  Part of this work was completed during the stay of the first and third-named authors at the Vietnam Institute for Advanced Study in Mathematics (VIASM) in the summer of 2025. We thank the institute for their hospitality and support. The second-named author was  partially supported by National Science Foundation grants DMS \#2506633 and \#2204324, as well as by a Simons' Foundation Travel Grant for Mathematicians \#SFI-MPS-TSM-00012804  (2025--2030).

	\bigskip
	%%%%%%%%%%%% Authors' addresses %%%%%%%%%%%%%

%
\address{{\it Ha Tuan Dung}\\
Department of Mathematics \\
	Hanoi Pedagogical University 2  \\
	Xuan Hoa, Phu Tho, Viet nam 
}
{hatuandung@hpu2.edu.vn}
\address{{\it Catherine Searle}\\
	Department of Mathematics, Statistics,\\ and Physics \\
	Wichita State University \\
	Wichita, Kansas, USA
}
{searle@math.wichita.edu}

\address{{\it Hung Tran}\\
	Department of Mathematics and Statistics\\
Texas Tech University, \\
Lubbock, TX, 79409, USA
}
{hung.tran@ttu.edu}

\end{document}